\newcommand{\E}{\mathbb{E}}
\renewcommand{\P}{\mathbb{P}}
\newcommand{\R}{\mathbb{R}}
\newcommand{\BP}{\mathscr{W}}
\newcommand{\cluster}{\mathscr{C}}
\renewcommand{\epsilon}{\varepsilon}
\newcommand{\constV}{C_\textup{V}}
\DeclareMathOperator\support{supp}
\newcommand{\pchain}{percolative chain }
\theoremstyle{definition}
\newtheorem{Def}{Definition}[section]
\newtheorem{rem}[Def]{Remark}
\theoremstyle{plain}
\newtheorem{thm}[Def]{Theorem}
\newtheorem{lem}[Def]{Lemma}
\newtheorem{prop}[Def]{Proposition}
\newtheorem{cor}[Def]{Corollary}
\title{Subcritical regimes in the Poisson Boolean percolation on Ahlfors regular spaces}
\author{Yutaka \textsc{takeuchi}\thanks{
    \begin{minipage}[t]{\textwidth}
        Faculty of Science and Technology, Keio University\;
        e-mail: \texttt{yutaka.takeuchi@keio.jp}
    \end{minipage}
}}
\date{}
\begin{document}
   \maketitle

   \begin{abstract}
    The Poisson Boolean percolation on a metric measure space is one of the percolation models. Intuitively, this model is obtained by collecting random balls whose centers form a Poisson point process.  
    In 2008, Gou\'{e}r\'{e} proved that for  $n \geq 2$, the Poisson Boolean percolation on $\R^n$ has the subcritical regime if and only if the radius distribution has finite $n$-th moment. In this paper, we extend Gou\'{e}r\'{e}'s result to Ahlfors regular metric measure spaces. 
   \end{abstract}

   \section{Introduction}
   \subsection{Motivation and historical backgrounds}
   In this paper, we study the relationship between geometric properties and the existence of subcritical regimes for Poisson Boolean percolation on metric measure spaces. Informally, the Poisson Boolean percolation is a union of metric balls centered at a Poisson point process with i.i.d. random radius. 
   The fundamental problem of percolation models is to study the existence of an infinite cluster for some parameter. We say that a percolation model has a supercritical regime if an infinite cluster exists for some parameter. On the other hand, we say that a percolation model has a subcritical regime if there is no infinite cluster for some parameter. If a percolation model has both the subcritical and supercritical regimes, we say that the phase transition occurs.  
   
   The Bernoulli bond percolation, introduced by Broadbent and Hammersley \cite{BroHam1957}, is the first percolation model. This model is obtained by removing edges of the lattice $\mathbb{Z}^d$ with same fixed probability and independently. This model is extended to other lattice models, Cayley graphs and trees. We refer the reader to \cite{Gri1999} and \cite{LyoPer2016} for a comprehensive treatment. Similarly, the Bernoulli site percolation is considered, which is obtained by removing sites.\\*\indent
   The Boolean percolation, also called the continuum percolation, is a continuum analogue of the Bernoulli site percolation. 
   The continuum percolation is first introduced by Gilbert \cite{Gil1961} as a random network on the plane.
   We recommend the book by Meester and Roy \cite{MeeRoy1996} for a comprehensive study. Hall \cite{Hall1985K} proved that the continuum percolation on $\R^n$ has the subcritical regime if the radius distribution has finite $(2n-1)$-moment, $n \geq 2$.
   Later, Gou\'{e}r\'{e} \cite{Gou2008} proved that the continuum percolation on $\R^n$ has the subcritical regime if and only if the radius distribution has finite $n$-moment, $n \geq 2$.

   For the study of the Poisson Boolean percolation on other metric spaces,  Tykesson studied the Poisson Boolean percolation with constant radii on the hyperbolic spaces. Tykesson \cite{Tyk2007} showed that there exist three regimes: In the first regime, there is no infinite occupied cluster and the number of infinite vacant cluster is infinite. In the second regime, there are infinitely many infinite clusters in both the occupied and the vacant region. In the third regime, There is a unique infinite occupied cluster and there is no infinite vacant cluster. 
   Tykesson \cite{Tyk2009} also examined the Poisson Boolean percolation with constant radii on a homogeneous space, that is, a Riemannian manifold for which there exists an isometry mapping between any two points. In \cite{Tyk2009}, Tykesson proved that if an unbounded cluster exists at intensity $\lambda_1$, then for any intensity $\lambda_2 > \lambda_1$, by coupling, the Boolean percolation at the intensity $\lambda_2$ contains the unbounded cluster at the intensity $\lambda_1$. 
   For more general metric space case, Coletti, Miranda and Mussini\cite{ColMirMus2016} showed that the subcritical and supercritical phases are invariant under mm-quasi-isometries. 

   The Bernoulli Boolean percolation, which is the discrete version of the Poisson Boolean percolation,  was studied by Coletti, Miranda and Grynberg in \cite{ColMirGry2020}. They found that Gou\'{e}r\'{e}'s result \cite{Gou2008} is applicable to the Bernoulli Boolean percolation on countable, locally finite, connected and doubling weighted graph under the doubling condition.

   In this paper, we extend Gou\'{e}r\'{e}'s results \cite{Gou2008}  to  general metric measure spaces which have some kind of global regularity, namely,  Ahlfors regularity. Our result is applicable to many examples. For example, it includes Riemannian manifolds, the unbounded Sierpinski gasket and ultrametric spaces and so on. See Section 4 below.
   
   \subsection{Settings and main results}
   Let $(S, d)$ be a locally compact, separable, and unbounded metric space and $\mu$ be a boundedly finite Borel measure (meaning every bounded Borel subset has finite $\mu$-measure) on $S$. Throughout this paper, we assume that $(S,d)$ is always locally compact, separable and unbounded.
   Let $\rho$ be a probability measure on $(0, \infty)$. 
   Let $\xi_\lambda$ be a Poisson point process on $S \times (0,\infty)$ with intensity measure $(\lambda \mu) \otimes \rho$, $\lambda > 0$, that is the random counting measure satisfying the following two properties: (a) for every Borel subset $A$ of $S\times (0,\infty)$, the distribution of the random variable $\xi_\lambda(A)$ is a Poisson distribution with parameter $\lambda\mu\otimes\rho(A)$, and, (b) if $A_1,\dots,A_n$ are disjoint Borel subsets, then $\xi_\lambda(A_1),\dots ,\xi_\lambda(A_n)$ are independent. We refer to the books \cite{LasPen2018}, \cite{DalV-Jon2003}, \cite{DalV-Jon2008} for the formal definition and more in-depth treatment of Poisson point processes.
   We denote the distribution of $\xi_\lambda$ by $\P_\lambda$. 
   The expectation with respect to the probability measure $\P_\lambda$ is denoted by $\E_\lambda$.
   
   Define the Poisson Boolean percolation $\BP \equiv \BP_\lambda(\mu, \rho)$ by 
   \begin{equation}
        \BP = \bigcup_{(x,R) \in \support \xi} B(x,R),
   \end{equation}
   where $B(x,R) = \{y \in S \mid d(x,y) < R\}$ is the metric open ball centered at $x$ with radius $R$. 
   A finite sequence of Poisson points $\{(z_i, R_i)\}_{i=0}^n \subset \support \xi$ is called a \pchain if $B(z_{i-1}, R_{i-1}) \cap B(z_i, R_i) \neq \emptyset$ for all $i = 1, \dots, n$. We say that two point $x$ and $y$ percolate each other and write $x \leftrightarrow y$ if there exists a \pchain $\{(z_i, R_i)\}_{i = 0}^n$ such that $x \in B(z_0, R_0)$ and $y \in B(z_n, R_n)$. Let $A, B_1, B_2$ be Borel subsets of $S$. We say that $B_1$ and $B_2$ percolate each other and write $B_1 \leftrightarrow B_2$ if there exist $x \in B_1$ and $y\in B_2$ such that $x \leftrightarrow y$. We say that $x$ and $y$ percolate each other in $A$ and write $x \leftrightarrow y \text{ in } A$ if there exists a \pchain $\{(z_i, R_i)\}_{i=0}^n$ such that $\{z_i\}_{i=0}^n \subset A$ and $x \in B(z_0 ,R_0)$ and $y \in B(x_n, R_n)$. We define $B_1 \leftrightarrow B_2$ in $A$ in a similar manner. Define a percolation cluster $\cluster(x)$ containing $x$ by $\cluster(x) = \{y \in S \mid x \leftrightarrow y\}$. Set $M(x) = \sup\{d(x,y) \mid y \in \cluster(x)\}$, where we use the convention $\sup\emptyset = 0$.  

   Next, we introduce the notion of the Ahlfors regular which plays a crucial role in the paper. 

   \begin{Def}
         Let $s > 0$. A Borel measure $\mu$ on $S$ is called $s$-Ahlfors regular if there exists $\constV \geq 1$ such that for all $x \in S$ and $r > 0$, we have 
        \begin{equation}\label{ine:AhlforsRegular}
            \constV^{-1} r^s \leq \mu(B(x,r)) \leq \constV r^s.
        \end{equation}
   \end{Def}

   We summarize a few properties of Ahlfors regular spaces. See \cite{MacTys2010} for example.
   \begin{lem}
        Let $(S, d, \mu)$ be an $s$-Ahlfors regular space. Let $\sigma > \constV^\frac{2}{\constV}$. Then  $(S, d)$ is $\sigma$-uniformly perfect, that is, $B(x, \sigma r) \setminus B(x, r)$ is nonempty for all $x \in S$ and $r > 0$. 
   \end{lem}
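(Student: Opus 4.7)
The proof will be a short volume-comparison argument by contradiction, with one caveat about the statement itself: as written, the threshold $\constV^{2/\constV}$ has no dependence on $s$, which does not match the scaling of Ahlfors regularity. The natural threshold forced by the inequalities is $\constV^{2/s}$, so I would read the statement as $\sigma > \constV^{2/s}$ (and I suspect this is a typo in the source).

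The plan is the following. Fix $x \in S$ and $r > 0$, and suppose toward contradiction that
\[
  B(x,\sigma r) \setminus B(x,r) = \emptyset,
\]
i.e.\ $B(x,\sigma r) \subseteq B(x,r)$. Monotonicity of $\mu$ then gives $\mu(B(x,\sigma r)) \leq \mu(B(x,r))$. Applying the lower Ahlfors bound on the left and the upper Ahlfors bound on the right from \eqref{ine:AhlforsRegular}, I obtain
\[
  \constV^{-1} (\sigma r)^s \;\leq\; \mu(B(x,\sigma r)) \;\leq\; \mu(B(x,r)) \;\leq\; \constV\, r^s.
\]
Canceling $r^s$ and rearranging yields $\sigma^s \leq \constV^{2}$, i.e.\ $\sigma \leq \constV^{2/s}$, which contradicts the hypothesis $\sigma > \constV^{2/s}$. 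Hence the annulus $B(x,\sigma r) \setminus B(x,r)$ must be nonempty.

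There is no real obstacle here beyond checking that the Ahlfors regularity inequalities apply at both radii $r$ and $\sigma r$, which they do since \eqref{ine:AhlforsRegular} is stated for all $r > 0$. The local compactness/unboundedness hypotheses on $(S,d)$ play no role in this particular argument; only the two-sided volume growth from the definition of $s$-Ahlfors regularity is used. Consequently the entire proof fits in a few lines, and the only thing to be careful about is to correct the exponent from $\constV^{2/\constV}$ to $\constV^{2/s}$ in the statement.
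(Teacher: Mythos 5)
Your argument is correct and is the standard volume-comparison proof of this fact; the paper itself gives no proof of this lemma (it only cites a reference), so there is nothing to compare against beyond noting that your two-line contradiction via $\constV^{-1}(\sigma r)^s \leq \mu(B(x,\sigma r)) \leq \mu(B(x,r)) \leq \constV r^s$ is exactly the expected route. Your observation that the threshold should be $\constV^{2/s}$ rather than $\constV^{2/\constV}$ is well taken: the exponent must scale with $s$ for the cancellation of $r^s$ to close the argument, and the constant as printed in the statement appears to be a typo.
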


   \begin{lem}
        Let $(S, d, \mu)$ be an $s$-Ahlfors regular space. Then $(S,d)$ is (metric) doubling, that is, there exists $N$ such that any ball $B(x, r)$ can be covered by at most $N$ balls of radius $r/2$.
   \end{lem}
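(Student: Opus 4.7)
The plan is to use a standard volume-packing argument: cover $B(x,r)$ by balls centered at a maximal separated subset, then bound the cardinality of this set by comparing the volumes of disjoint small balls around its points with the volume of a slightly enlarged ball.

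Concretely, I would first fix $x \in S$ and $r > 0$ and let $P \subseteq B(x,r)$ be a maximal $(r/2)$-separated subset, i.e.\ a maximal subset with $d(p,q) \geq r/2$ for all distinct $p,q \in P$. Such a set exists by Zorn's lemma (or by a straightforward greedy construction, which also shows $P$ is finite thanks to the Ahlfors upper bound; see below). By maximality, for every $y \in B(x,r)$ there is some $p \in P$ with $d(y,p) < r/2$, hence
\begin{equation*}
B(x,r) \subseteq \bigcup_{p \in P} B(p, r/2).
\end{equation*}
So it suffices to bound $|P|$ by a constant depending only on $\constV$ and $s$.

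Next I would observe that the balls $\{B(p, r/4)\}_{p \in P}$ are pairwise disjoint, since distinct points of $P$ are at distance at least $r/2$, and that each such ball is contained in $B(x, 5r/4)$ because $p \in B(x,r)$ and the ball has radius $r/4$. Summing measures and applying the two sides of the Ahlfors regularity inequality \eqref{ine:AhlforsRegular},
\begin{equation*}
|P| \cdot \constV^{-1} (r/4)^s \;\leq\; \sum_{p \in P} \mu(B(p, r/4)) \;\leq\; \mu(B(x, 5r/4)) \;\leq\; \constV \, (5r/4)^s,
\end{equation*}
which yields $|P| \leq \constV^2 \cdot 5^s$, independent of $x$ and $r$. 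Setting $N = \lfloor \constV^2 \cdot 5^s \rfloor$ finishes the proof.

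There is no real obstacle: the only delicate point is to verify that a maximal $(r/2)$-separated subset of $B(x,r)$ exists and is finite, but finiteness is an immediate byproduct of the volume estimate above (any $(r/2)$-separated subset has cardinality at most $\constV^2 \cdot 5^s$), so one may in fact construct $P$ greedily and the process must terminate.
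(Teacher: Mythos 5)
Your proof is correct: the maximal $(r/2)$-separated set covers $B(x,r)$ by maximality, the balls of radius $r/4$ around its points are disjoint and sit inside $B(x,5r/4)$, and the two-sided Ahlfors bound gives the uniform cardinality bound $\constV^2 5^s$. The paper states this lemma without proof, deferring to \cite{MacTys2010}, where essentially this same standard volume-packing argument is given, so your write-up matches the intended proof.
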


   Roughly speaking, the doubling property enables us to control the number of balls contained in a good covering. The uniformly perfectness tells us there is no ``large'' hole in the space. These two properties play important role in the proof of the existence of the subcritical regimes.

   We now state the main result in this paper.
   \begin{thm}\label{thm:Subcritical}
        Let $s > 0$. Let $(S,d,\mu)$ be an $s$-Ahlfors regular space. Let $\rho$ be a probability measure on $(0,\infty)$. 
        \begin{enumerate}[label=(\arabic*)]
            \item The followings are equivalent:  
            \begin{enumerate}[label=(\alph*)]
                \item[(1-a)] There exists $\lambda_0 >0$ such that for all  $\lambda \in (0,\lambda_0)$, we have
                \begin{equation*}\label{eqn:NoPercolation}
                    \lim_{r \to \infty}\sup_{x \in S} \P_\lambda(M(x) > r) = 0, 
                \end{equation*}
                \item[(1-b)] $\int_0^\infty R^s \rho(dR) < \infty$.
            \end{enumerate}
            \item Let $\beta >0$. Then the followings are equivalent:
            \begin{enumerate}[label=(\alph*)]
                \item[(2-a)] There exists $\lambda_0 >0$ such that for all  $\lambda \in (0,\lambda_0)$, we have
                \begin{equation*}
                    \sup_{x \in S} \E_\lambda[M(x)^\beta]< \infty, 
                \end{equation*}
                \item[(2-b)] $\int_0^\infty R^{s+\beta} \rho(dR) < \infty$.
            \end{enumerate}
        \end{enumerate}
   \end{thm}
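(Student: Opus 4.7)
For (1-a)$\Rightarrow$(1-b) and (2-a)$\Rightarrow$(2-b), I would fix $x\in S$, choose $\sigma>\constV^{2/\constV}$ as in Lemma 1.2, and exploit that any Poisson point $(z,R)\in\support\xi_\lambda$ with $z\in B(x,R/(2\sigma))$ forces $M(x)\geq R/(2\sigma)$: uniform perfectness produces $y\in B(z,R)\setminus B(z,R/\sigma)$, so $d(x,y)\geq R/\sigma-R/(2\sigma)=R/(2\sigma)$ while $y\in B(z,R)\subseteq\cluster(x)$. By Ahlfors regularity the number of such points with $R\geq 2\sigma t$ is Poisson with mean at least $c\lambda\int_{2\sigma t}^{\infty}R^{s}\rho(dR)$. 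If $\int R^{s}\rho(dR)=\infty$ this mean is infinite for every $t$, so $M(x)=\infty$ almost surely for every $\lambda>0$, ruling out (1-a); combining the same estimate with $1-e^{-u}\geq u/2$ for $u\leq 1$ and Fubini yields $\E_\lambda[M(x)^{\beta}]\gtrsim\lambda\int R^{s+\beta}\rho(dR)$, which rules out (2-a) (splitting into the subcases $\int R^{s}\rho<\infty$ and $\int R^{s}\rho=\infty$).

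\textbf{The hard direction: multi-scale renormalization.} For (1-b)$\Rightarrow$(1-a) and (2-b)$\Rightarrow$(2-a), I would adapt Gou\'{e}r\'{e}'s \cite{Gou2008} renormalization scheme to the Ahlfors-regular setting. Define
\begin{equation*}
    \pi_\lambda(r)=\sup_{x\in S}\P_\lambda\bigl(B(x,r)\leftrightarrow S\setminus B(x,3r)\bigr).
\end{equation*}
The core lemma is a recursive inequality
\begin{equation*}
    \pi_\lambda(3r)\leq K\,\pi_\lambda(r)^{2}+C\lambda\,r^{s}\,\rho\bigl([r/3,\infty)\bigr),
\end{equation*}
with $K$ controlled by the doubling constant of Lemma 1.3 and $C$ depending on the Ahlfors and doubling constants. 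To derive it, split $\{B(x,3r)\leftrightarrow S\setminus B(x,9r)\}$ according to the radii of the balls in the chain. If some ball has radius $\geq r/3$, a Mecke first-moment bound of $\lambda\mu(B(x,10r))\rho([r/3,\infty))$ produces the tail term via Ahlfors regularity. If every radius is $<r/3$, the chain must cross an intermediate shell of width $\gtrsim r$; the doubling property provides a cover of that shell by $K$ balls $B(y_j,r)$, so the chain passes through some $B(y_j,r)$ and thereby splits into inner and outer sub-chains, each realizing $B(y_j,r)\leftrightarrow S\setminus B(y_j,3r)$ (the geometry works out because $d(x,y_j)$ lies near $6r$). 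Since the two halves use disjoint Poisson points, a BK-type disjoint-occurrence inequality for Poisson processes bounds their joint probability by $\pi_\lambda(r)^{2}$, and summing over $j$ gives the $K\pi_\lambda(r)^{2}$ term.

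\textbf{Iteration, moments, and main obstacle.} The base-case estimate $\P_\lambda(\BP\cap B(x,r_0)\neq\emptyset)\leq\lambda\int\mu(B(x,r_0+R))\rho(dR)\lesssim\lambda(r_0^{s}+\int R^{s}\rho(dR))$ is uniform in $x$ by Ahlfors regularity, so $\pi_\lambda(r_0)\to 0$ as $\lambda\to 0$ for any fixed $r_0$. Taking $\lambda_0$ small so that $K\pi_\lambda(r_0)<1/2$ and iterating the recurrence, (1-b) makes the tail $r^{s}\rho([r/3,\infty))$ vanish at infinity, and a comparison argument ($\limsup p_n$ satisfies $L\leq KL^{2}$, forcing $L\in\{0\}\cup[1/K,\infty)$) yields $\pi_\lambda(r)\to 0$. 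Combined with $\{M(x)>3r\}\subseteq\{B(x,r)\leftrightarrow S\setminus B(x,3r)\}$, this establishes (1-a). For (2-a), iteration of the recursion produces $\pi_\lambda(r)\lesssim r^{s}\rho([r/3,\infty))$ at infinity, and $\E_\lambda[M(x)^{\beta}]=\beta\int t^{\beta-1}\P_\lambda(M(x)>t)\,dt$ together with Fubini converts this into a quantity controlled by $\int R^{s+\beta}\rho(dR)$, which is finite by (2-b). The most delicate step is the chain-decomposition in the ``small balls only'' case: one must arrange the inner and outer sub-chains so that their Poisson supports are genuinely Borel disjoint in $S\times(0,\infty)$, which is the precise hypothesis of the Poisson BK inequality and is what produces the $\pi_\lambda(r)^{2}$ factor. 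In $\mathbb{R}^{n}$ Gou\'{e}r\'{e} exploits Euclidean annular geometry; in an Ahlfors-regular space one substitutes doubling (to produce the cover by $K$ balls) together with the radius bound $R<r/3$ (to keep the two halves in disjoint metric shells around $y_j$), while Ahlfors regularity provides the uniform-in-$x$ control demanded throughout.
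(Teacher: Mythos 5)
Your argument for (1-a)$\Rightarrow$(1-b) and (2-a)$\Rightarrow$(2-b) is correct and close in spirit to the paper's Section 3: both rest on a first-moment lower bound via the lower Ahlfors inequality. You produce a far point of $\cluster(x)$ inside a single large Poisson ball containing $x$ using uniform perfectness, while the paper instead lower-bounds the probability that a single Poisson ball contains all of $B(o,r)$ (Lemma \ref{lem:CoveringOneLargeBall}) and then runs the same $1-e^{-u}$ and Fubini computations (Propositions \ref{pro:WholeCover} and \ref{pro:NonexistenceSubcriticalBehavior}). Either route works.

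\textbf{On the forward directions there are two genuine gaps.} First, the product term. You split the crossing chain at a pivot ball meeting some $B(y_j,r)$ in a single intermediate shell and appeal to a BK-type inequality. But the inner and outer sub-chains obtained by cutting at the pivot index both contain the pivot ball $(z_i,R_i)$, so the two crossing events do \emph{not} occur on disjoint sets of Poisson points, and the hypothesis of any disjoint-occurrence inequality fails as stated; moreover, a BK inequality for Boolean models on general Ahlfors regular spaces is not an off-the-shelf tool and you do not supply one. The paper avoids BK entirely: a long crossing is forced to cross \emph{two} shells at well-separated scales (the sets $K(x,r)$ at scale $10\sigma^3 r$ to $10\sigma^4 r$ and $L(x,r)$ at scale $80\sigma^4 r$ to $80\sigma^5 r$ in Lemmas \ref{lem:KeyCovering}--\ref{lem:Inclusion}), the event $G(y,r)$ is by definition confined to the ball $B(y,10\sigma^3 r)$, and the two fattened shells are shown to be disjoint, so the two crossing events are \emph{independent} by the defining property of the Poisson process (Proposition \ref{prp:UniformRecursiveEstimate}). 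Your crossing event $B(x,r)\leftrightarrow S\setminus B(x,3r)$ carries no spatial confinement, which is a second reason independence is unavailable on your route. Second, the tail term: bounding the ``some radius $\ge r/3$'' case by $\lambda\mu(B(x,10r))\rho([r/3,\infty))$ misses large balls centered outside $B(x,10r)$ --- a ball of radius $100r$ centered at distance $50r$ from $x$ realizes the crossing by itself. The relevant big balls are those with center within distance $9r+R$ of $x$, giving a term of order $\lambda\int_{r/3}^\infty R^s\rho(dR)$; this is exactly what the paper's events $H(x,r)$ and $\widetilde H(x,r)$ encode, and since it still vanishes under (1-b) and has the required $r^{\beta-1}$-integrability under (2-b), this gap is repairable. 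Finally, a smaller point: the iteration does not yield the pointwise asymptotic $\pi_\lambda(r)\lesssim r^s\rho([r/3,\infty))$ you invoke for the moment bound; what it yields (Lemma \ref{lem:TechnicalLem}, borrowed from Gou\'er\'e) is the transfer of integrability $\int_1^\infty r^{\beta-1}g(r)\,dr<\infty\Rightarrow\int_1^\infty r^{\beta-1}f(r)\,dr<\infty$, which is what one should use.
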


   For the (1-b) $\Rightarrow$ (1-a) and (2-b) $\Rightarrow$ (2-a) parts, we will prove the following weaker version. As we will see later,  the lower bound in $(\ref{ine:AhlforsRegular})$ is used only for deducing the doubling property and the uniformly perfectness in the proof of the existence of the subcritical regimes. 
   \begin{thm}\label{thm:Subcritical2}
    Let $\sigma > 1$. Let $(S,d)$ be a doubling, $\sigma$-uniformly perfect metric space. Let $\mu$ be an infinite and boundedly finite measure on $S$. Let $\rho$ be a probability measure on $(0,\infty)$. Suppose that there exists $\constV \geq 1$ and $s > 0$ such that $\mu(B(x,r)) \leq \constV r^s$ holds for all $x\in S$, $r >0$. 
    \begin{enumerate}[label=(\arabic*)]
        \item  If $\int_0^\infty R^s\rho(dR) < \infty$, then there exists $\lambda_0 >0$ such that for all  $\lambda \in (0,\lambda_0)$, we have
        \begin{equation*}
            \lim_{r \to \infty}\sup_{x \in S} \P_\lambda(M(x) > r) = 0. 
        \end{equation*}
        \item If there exists $\beta >0$ such that $\int_0^\infty R^{s+\beta} \rho(dR)< \infty$, then There exists $\lambda_0 >0$ such that for all  $\lambda \in (0,\lambda_0)$, we have
        \begin{equation*}
            \sup_{x \in S} \E_\lambda[M(x)^\beta]< \infty.
        \end{equation*}
    \end{enumerate}
   \end{thm}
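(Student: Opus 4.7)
The plan is to adapt Gouéré's multi-scale renormalization from \cite{Gou2008} to the present metric-measure setting, with doubling and uniform perfectness replacing the Euclidean scaling structure. I would set
\begin{equation*}
\pi_\lambda(r) := \sup_{x \in S} \P_\lambda\bigl( B(x, r) \leftrightarrow S \setminus B(x, 2r)\bigr),
\end{equation*}
so that $\P_\lambda(M(x) > 2r) \leq \pi_\lambda(r)$ for every $x \in S$. Part (1) then reduces to showing $\pi_\lambda(r) \to 0$ as $r \to \infty$ for $\lambda$ small, and part (2) to a quantitative version of this decay.

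The heart of the argument is a renormalization inequality of the form
\begin{equation*}
\pi_\lambda(2r) \;\leq\; N^2 \,\pi_\lambda(r/K)^2 \;+\; C_1 \lambda \int_{R \geq r/K} R^s \, \rho(dR),
\end{equation*}
with integer parameters $K, N$ and constant $C_1$ depending only on the doubling constant, on $\sigma$, and on $\constV$. To derive it, I would split the Poisson process at radius $r/K$. Balls of radius $R \geq r/K$ intersecting $B(x, 2r)$ produce, via Campbell's formula and the bound $\mu(B(x, R + 2r)) \leq \constV (R+2r)^s$, the tail term. On the complementary event, every Poisson ball meeting $B(x, 2r)$ has radius $< r/K$, so any \pchain joining $B(x,r)$ to $S \setminus B(x, 2r)$ must separately cross two disjoint intermediate shells inside $B(x, 2r) \setminus B(x, r)$; doubling covers each such shell by at most $N$ balls of radius $r/K$, and since the two shells can be chosen separated, independence of the Poisson process over disjoint regions of $S \times (0, r/K]$ yields the quadratic term $N^2 \pi_\lambda(r/K)^2$.

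With this recursion, the usual bootstrap applies: fix $r_0$ large and $\lambda_0$ small so that $N^2 \pi_{\lambda_0}(r_0) \leq 1/2$ and the tail contribution is negligible compared to $\pi_{\lambda_0}(r_0)$; iterating along the scales $r_0, 2r_0, 4r_0, \dots$ then forces $\pi_\lambda(r) \to 0$, which gives part (1). For part (2), a more careful iteration that tracks the auxiliary function $g(r) := \int_{R \geq r/K} R^s \rho(dR)$ yields $\pi_\lambda(r) \leq C g(r)$ for $r$ large and $\lambda \leq \lambda_0$; the assumption $\int R^{s+\beta} \rho(dR) < \infty$ implies $\int_0^\infty r^{\beta - 1} g(r)\, dr < \infty$ by Fubini, whence
\begin{equation*}
\E_\lambda[M(x)^\beta] = \beta \int_0^\infty r^{\beta-1} \P_\lambda(M(x) > r)\, dr < \infty
\end{equation*}
uniformly in $x$. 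The main obstacle is the geometric step inside the recursion: one must exhibit two disjoint intermediate shells at every scale $r$ and every basepoint $x$, with the covering number $N$ depending only on the doubling constant and $\sigma$. Uniform perfectness is essential here to prevent the annulus $B(x, 2r) \setminus B(x, r)$ from being degenerate and to guarantee that the two shells can actually be chosen strictly disjoint at a scale where doubling still yields a bounded cover; this is the precise place where the geometric hypotheses of the theorem are used.
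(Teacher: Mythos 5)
Your proposal follows essentially the same route as the paper: Gou\'er\'e's renormalization with a crossing probability $f(r)$ satisfying $f(\text{big scale}) \leq C f(\text{small scale})^2 + \lambda\cdot(\text{tail of the } s\text{-moment})$, where doubling supplies the bounded covers of two well-separated shells, disjointness of the corresponding regions of $S\times(0,\infty)$ supplies independence, and Gou\'er\'e's iteration lemma converts the recursion into decay of $f$ and, under the $(s+\beta)$-moment hypothesis, into $\int_1^\infty r^{\beta-1}f(r)\,dr<\infty$; this is exactly the paper's scheme with its events $G$, $H$, $\widetilde H$. The one step you should make explicit is that the shell-crossing events at the smaller scale must be \emph{spatially localized} (e.g.\ by restricting chain centers to a fixed dilate of the covering ball, as the paper does by building ``in $B(x,10\sigma^3 r)$'' into the definition of $G(x,r)$) before invoking independence, since the unlocalized events $\{B(y,r/K)\leftrightarrow S\setminus B(y,2r/K)\}$ at two different centers are not independent; the localized events are still bounded by $\pi_\lambda(r/K)$ by monotonicity, so the recursion closes as you claim.
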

   \begin{rem}
      We remark that the above theorem does not guarantee the existence of the phase transition and the supercritical regime. Indeed, the one dimensional Euclidean space always lacks the supercritical regime whenever a radius distribution $\rho$ has the finite first moment. 
   \end{rem}

   Since the delta measure has finite $s$-moment, we can apply the above theorem to the constant radii case.  
   \begin{cor}\label{cor:SubcriticalForConstantRadii}
    Let $(S,d)$ be a doubling, uniformly perfect metric space. Let $\mu$ be an infinite and boundedly finite measure on $S$. Suppose that there exists $\constV \geq 1$ and $s > 0$ such that $\mu(B(x,r)) \leq \constV r^s$ holds for all $x\in S$, $r >0$. Let $\rho$ be the delta measure massed at some fixed radius $R_0 >0$. Then, there exists $\lambda_0 >0$ such that for all  $\lambda \in (0,\lambda_0)$, we have 
    $$\lim_{r \to \infty}\sup_{x \in S} \P_\lambda(M(x) > r) = 0. $$
\end{cor}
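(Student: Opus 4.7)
The plan is to derive this corollary as an immediate specialization of Theorem~\ref{thm:Subcritical2}(1), taking the radius distribution to be a point mass. All the geometric work — doubling, uniform perfectness, the Poisson Boolean construction, the renormalization or multiscale argument producing $\lambda_0$ — is already packaged inside Theorem~\ref{thm:Subcritical2}, so nothing new needs to be built here.

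To carry this out, I would simply check that the four hypotheses of Theorem~\ref{thm:Subcritical2}(1) are satisfied under the assumptions of the corollary. Three are already present verbatim: $(S,d)$ is doubling, $(S,d)$ is $\sigma$-uniformly perfect for some $\sigma>1$ (by the convention that ``uniformly perfect'' means $\sigma$-uniformly perfect for some such $\sigma$, one may fix the witnessing constant and feed it into the theorem), and $\mu$ is infinite, boundedly finite, and obeys the one-sided volume bound $\mu(B(x,r))\le\constV r^s$. The remaining hypothesis is the moment condition $\int_0^\infty R^s\,\rho(dR)<\infty$. With $\rho=\delta_{R_0}$ the Dirac mass at $R_0>0$, this integral reduces to $R_0^s$, which is finite. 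So the hypothesis is satisfied.

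Applying Theorem~\ref{thm:Subcritical2}(1) then yields $\lambda_0>0$ such that $\lim_{r\to\infty}\sup_{x\in S}\P_\lambda(M(x)>r)=0$ for every $\lambda\in(0,\lambda_0)$, which is exactly the statement of the corollary. There is no genuine obstacle; the corollary is stated to highlight the natural constant-radius setting — the situation studied by Tykesson on hyperbolic and homogeneous spaces — as a clean special case where the main theorem applies without any extra integrability input. The entire probabilistic and geometric difficulty remains concentrated in the proof of Theorem~\ref{thm:Subcritical2} itself.
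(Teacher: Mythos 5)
Your proposal is correct and matches the paper exactly: the paper derives the corollary in one line by noting that the delta measure at $R_0$ has finite $s$-moment ($\int_0^\infty R^s\,\delta_{R_0}(dR)=R_0^s<\infty$) and then invoking Theorem~\ref{thm:Subcritical2}(1). Nothing further is needed.
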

   The key ingredient of the proof of Theorem \ref{thm:Subcritical2} is estimating the probability that percolates a ball $B(x,r)$ to a ``surface" of a larger ball $B(x, R_1)$ within an even larger ball $B(x, R_2)$. However, an annulus $B(x,R_1) \setminus B(x, r)$ may be empty in general metric spaces. This is the main difference from Gou\'{e}r\'{e}'s work \cite{Gou2008}.
   In uniformly perfect spaces, when expanding a ball sufficiently, the annulus between the original and expanded balls is guaranteed to be non-empty, with a uniform dilation parameter.  This enables us to use Gou\'{e}r\'{e}'s technique after some modifications. 
   In addition, if $S$ has the doubling property, we can estimate the number of balls contained in a good covering. Hence, we can control the percolation probability of an annulus by using the union bound.

   For the proof of rest part of Theorem \ref{thm:Subcritical}, we will prove by contraposition. That is, we will prove the nonexistence of the subcritical regimes under the condition that $s$-moment or $s+\beta$-moment diverges. For the proof of the nonexistence of the subcritical regimes, we only use the volume control $(\ref{ine:AhlforsRegular})$. 

   This paper is organized as follows. In Section 2, we prove Theorem \ref{thm:Subcritical2}. In Section 3, we discuss about the nonexistence of the subcritical regime. Finally in Section 4, we provide some examples which are applicable to the main theorem.

   \section{Existence of subcritical regimes}
   In this section, we prove Theorem \ref{thm:Subcritical2}.
   Throughout this section, we assume that $(S,d)$ is doubling and $\sigma$-uniformly perfect, $\sigma > 1$. We further assume that there exists a constant $\constV \geq 1$ such that $\mu(B(x,r))\leq \constV r^s$ holds for all $x\in S$ and $r>0$. 
   We first introduce two events.
   Set 
   \begin{align*}
        &G(x, r) = \{B(x,r) \leftrightarrow B(x, 9\sigma^2r) \setminus B(x, 8\sigma r) \text{ in } B(x, 10\sigma^3r)\} \\
        &H(x, r) = \left\{
                \exists(y, R) \in \support \xi \text{ such that } y \in B(x, 10\sigma^3r)^c \text{ and } R > \frac{d(x,y)}{10\tau}
            \right\},
   \end{align*}
   where $\tau = \frac{\sigma}{10\sigma -9}$. 

   \begin{lem}\label{lem:ImplicationOfLongCluster}
    For all $x \in S$ and $r>0$, the following inclusion holds:
        \begin{equation*}
            \{M(x) > 9\sigma^2 r\} \subset G(x,r) \cup  H(x,r).
        \end{equation*}
   \end{lem}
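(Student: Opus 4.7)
The plan is to show that under $H(x,r)^c$ the event $\{M(x) > 9\sigma^2 r\}$ forces $G(x,r)$, which yields the claimed inclusion. The whole argument hinges on the following quantitative consequence of $H(x,r)^c$: any Poisson point $(z,R) \in \support\xi$ with $z \notin B(x, 10\sigma^3 r)$ satisfies $R \leq d(x,z)/(10\tau)$, and the identity $1 - 1/(10\tau) = 9/(10\sigma)$, which follows from $\tau = \sigma/(10\sigma-9)$, yields for every $w \in B(z,R)$ the bound
\[
d(x,w) \geq d(x,z) - R \geq d(x,z)\cdot\tfrac{9}{10\sigma} > 10\sigma^3 r \cdot \tfrac{9}{10\sigma} = 9\sigma^2 r.
\]
Equivalently, under $H(x,r)^c$ every Poisson ball meeting $B(x,9\sigma^2 r)$ must be centred inside $B(x, 10\sigma^3 r)$.

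Given $M(x) > 9\sigma^2 r$, I would pick $y \in \cluster(x)$ with $d(x,y) > 9\sigma^2 r$, a witnessing percolative chain $\{(z_i, R_i)\}_{i=0}^n$, and for each $i \in \{1, \dots, n\}$ an overlap point $p_i \in B(z_{i-1}, R_{i-1}) \cap B(z_i, R_i)$; set $p_0 = x$ and $p_{n+1} = y$. Let $j$ be the smallest index with $d(x, p_j) \geq 8\sigma r$; since $d(x, p_{n+1}) > 9\sigma^2 r$ and $d(x, p_0) = 0$, such $j$ exists and satisfies $j \geq 1$. For every $i \leq j-1$, $B(z_i, R_i)$ contains $p_i$ with $d(x, p_i) < 8\sigma r < 9\sigma^2 r$, so by the first paragraph $z_i \in B(x, 10\sigma^3 r)$; the same applies to $z_{j-1}$. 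Hence the truncated chain $\{(z_i, R_i)\}_{i=0}^{j-1}$ has all its centres inside $B(x, 10\sigma^3 r)$, and with $x \in B(x,r) \cap B(z_0, R_0)$ the only remaining task for witnessing $G(x,r)$ is to find a point $v \in B(z_{j-1}, R_{j-1}) \cap (B(x, 9\sigma^2 r) \setminus B(x, 8\sigma r))$.

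If $d(x, p_j) < 9\sigma^2 r$, the choice $v = p_j$ works immediately. The main obstacle is the residual case $d(x, p_j) \geq 9\sigma^2 r$, in which $B(z_{j-1}, R_{j-1})$ contains both the close point $p_{j-1}$ and the far point $p_j$ while potentially skipping the annulus entirely. In this situation the triangle inequality gives $R_{j-1} > \sigma(9\sigma-8)r/2$ together with
\[
d(x, z_{j-1}) \in \bigl(9\sigma^2 r - R_{j-1},\; 8\sigma r + R_{j-1}\bigr);
\]
when $R_{j-1} < \sigma(9\sigma-8)r$ this interval already lies inside the annulus, so $v = z_{j-1}$ suffices. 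For larger $R_{j-1}$ I would invoke the $\sigma$-uniform perfectness at $z_{j-1}$, choosing the scale so that the guaranteed point in the resulting concentric annulus around $z_{j-1}$ sits inside $B(z_{j-1}, R_{j-1})$ while having distance from $x$ in the target annulus --- made possible because the target annulus has multiplicative thickness $9\sigma/8 > \sigma$, matching the uniform perfectness constant. This step, replacing Gou\'er\'e's elementary continuity/convexity argument available in the Euclidean setting, is precisely the key technical modification flagged in the introduction.
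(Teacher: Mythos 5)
Your setup and your first two cases are sound, and up to that point you are in fact more careful than the paper: the paper proves the contrapositive $G(x,r)^c\cap H(x,r)^c\subset\{M(x)\leq 9\sigma^2 r\}$, establishes exactly the two facts in your first paragraph, and then asserts the conclusion, silently assuming that a chain starting at $x$ and reaching distance $>9\sigma^2 r$ must put a point into the annulus $B(x,9\sigma^2 r)\setminus B(x,8\sigma r)$. You have correctly isolated that assumption as the crux (your ``residual case''), and your subcase $R_{j-1}\leq\sigma(9\sigma-8)r$, handled by $v=z_{j-1}$, is correct.

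The gap is the final step, and it is not fillable. Uniform perfectness at $z_{j-1}$ produces, for any chosen scale $t$, a point $u$ with $t\leq d(z_{j-1},u)<\sigma t$, but the only control on $d(x,u)$ is $|d(x,z_{j-1})-d(z_{j-1},u)|\leq d(x,u)\leq d(x,z_{j-1})+d(z_{j-1},u)$, an interval of length $2d(z_{j-1},u)$ that need not meet $[8\sigma r,9\sigma^2 r)$; chasing the constraints shows your scheme can only be forced through when $d(x,z_{j-1})\leq\sigma^2 r/(\sigma+1)$, whereas in the residual case $d(x,z_{j-1})$ can be anywhere up to $10\sigma^3 r$. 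Worse, the required point $v$ need not exist at all. Take $S=\mathbb{R}\times\{0,40r\}\subset\mathbb{R}^2$ with the Euclidean metric (doubling, $\sigma$-uniformly perfect for every $\sigma>1$, and $1$-Ahlfors regular), $x=(0,0)$, $\sigma=2$, and the configuration consisting of the single Poisson point $z=(0,40r)$ with radius $R=43r$. Then $d(x,z)=40r<80r=10\sigma^3r$, so $H(x,r)$ fails; $x\in B(z,R)$ and $z\in\cluster(x)$ with $d(x,z)=40r>36r=9\sigma^2 r$, so $M(x)>9\sigma^2 r$; but $B(z,R)$ meets $\mathbb{R}\times\{0\}$ only in $\{(t,0):|t|<\sqrt{249}\,r\}\subset B(x,16r)$ and meets $\mathbb{R}\times\{40r\}$ only at distance $\geq 40r$ from $x$, so it contains no point of $B(x,36r)\setminus B(x,16r)$ and $G(x,r)$ fails. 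Thus the stated inclusion can fail outright; the non-emptiness of the target annulus (which is all that $9\sigma/8>\sigma$ and uniform perfectness at $x$ give you) does not prevent a single large ball from skipping it. In short, you have located the genuine difficulty --- which the paper's own proof glosses over in its last sentence --- but your proposed repair via uniform perfectness at $z_{j-1}$ does not close it, and no repair that keeps the lemma in its present form can.
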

   \begin{proof}
        We will show $G(x,r)^c \cap H(x,r)^c \subset \{M(x) \leq 9\sigma^2 r\}$. Suppose that $G(x,r)^c \cap H(x,r)^c$ occurs. Since $G(x,r)$ does not occur, any \pchain whose centers are contained in $B(x, 10\sigma^3r)$ does not connect $B(x, r)$ and $B(x, 9\sigma^2r) \setminus B(x, 8\sigma r)$. Next, observe that $\frac{10\tau -1}{10\tau} = \frac{9\sigma^2}{10\sigma^3} = \frac{9}{10}\sigma^{-1} \leq 1$. Since $H(x,r)$ does not occur, for all $(z, R) \in \support \xi$ with $z \in B(x, 10\sigma^3 r)^c$, we have 
        \begin{align*}
            d(x,z) - R \geq d(x,z) - \frac{d(x,z)}{10\tau} = \frac{10\tau -1}{10\tau}d(x,z) \geq \frac{9}{10}\sigma^{-1} \cdot 10\sigma^3 r = 9 \sigma^2 r.
        \end{align*}
        Hence for all $w \in B(z,R)$ we have $d(x,w) \geq d(x,z) - d(z, w) \geq d(x,z) - R \geq 9 \sigma_2 r$ and this implies $w \notin B(x, 9\sigma^2 r).$ Therefore, any percolative ball $B(z,R)$ centered at $B(x, 10\sigma^3r)^c$ cannot touch $B(x, 9\sigma^2r) \setminus B(x, 8\sigma r)$. Combining the above observations, we conclude that $M(x)$ must be less than or equal to $9 \sigma^2 r$.
    \end{proof}
    
   Since Lemma \ref{lem:Inclusion} holds, we need to estimate the probabilities of the events $G(x,r)$ and $H(x,r)$. 
   To estimate the probability of the event $G(x,r)$, we introduce a condition for a good covering.
   Let $x \in S$, $l,r > 0$ and $\epsilon \in (0,1)$.
   We say that a Borel subset $O\subset S$ satisfies the condition $(I_{x,l,r, \epsilon})$ if $O$ satisfies the followings:
   \begin{enumerate}[label=(\arabic*)]
    \item $\displaystyle O \subset B(x, \sigma r) \setminus B(x, r) \text{ and }  \bigcup_{y \in O}B(y,l) \supset B(x, \sigma r) \setminus B(x,  r)$,
    \item $O$ is $\epsilon l$-separated, that is, $d(y_1, y_2) \geq \epsilon l$ for each distinct $y_1, y_2 \in O$. 
   \end{enumerate}

   \begin{lem}\label{lem:KeyCovering}
        For all $x\in S$ and $r>0$, there are finite subsets $K(x,r)$ and $L(x,r)$ of $S$ that satisfy the conditions $(I_{x,r, 10\sigma^3 r, 1/5})$ and $(I_{x,r, 80\sigma^4 r, 1/5})$. Moreover, there exist constants $M_1, M_2 > 0$ such that for all $x \in S$ and $r>0$, we have $\# K(x,r) \leq M_1$ and $\# L(x,r) \leq M_2$.
   \end{lem}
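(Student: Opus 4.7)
The plan is to construct $K(x,r)$ and $L(x,r)$ as maximal $r/5$-separated subsets of the corresponding annuli, then bound their cardinalities uniformly via the doubling property.

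First I fix the two annuli $A_1 = B(x, 10\sigma^4 r) \setminus B(x, 10\sigma^3 r)$ and $A_2 = B(x, 80\sigma^5 r) \setminus B(x, 80\sigma^4 r)$ which are the annuli arising in $(I_{x,r,10\sigma^3 r,1/5})$ and $(I_{x,r,80\sigma^4 r,1/5})$ respectively (since the conditions specify covering radius $l=r$, inner radius $r$ in slot three, and the outer radius $\sigma r$ is built in through the definition). I then take $K(x,r) \subset A_1$ and $L(x,r) \subset A_2$ to be maximal among subsets with the property that any two distinct elements are at mutual distance at least $r/5$; a greedy enumeration suffices as soon as finiteness is established, see below. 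Condition (2) of the definition is then immediate. For the covering part of (1), if $z \in A_1 \setminus K(x,r)$, maximality forces $K(x,r) \cup \{z\}$ to fail the $r/5$-separation, hence some $y \in K(x,r)$ satisfies $d(y,z) < r/5 < r$ and $z \in B(y,r)$; points of $K(x,r)$ are trivially covered by themselves. The same reasoning gives the covering property for $L(x,r)$.

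The cardinality bound is where the doubling property enters. Let $N$ be a doubling constant so that every ball of radius $R$ admits a cover by $N$ balls of radius $R/2$; iterating, a ball of radius $R$ admits a cover by $N^k$ balls of radius $R/2^k$. I choose $k_1 = \lceil \log_2(100\sigma^4)\rceil$, depending only on $\sigma$, so that $(10\sigma^4 r)/2^{k_1} \leq r/10$. Then $B(x, 10\sigma^4 r) \supset K(x,r)$ is covered by at most $N^{k_1}$ balls of radius $r/10$. Any two points inside a common ball of radius $r/10$ are at distance strictly less than $r/5$, contradicting the $r/5$-separation of $K(x,r)$; hence each such small ball contains at most one element of $K(x,r)$, giving $\# K(x,r) \leq N^{k_1} =: M_1$, uniform in $x$ and $r$. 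Replacing $10\sigma^4$ by $80\sigma^5$ and choosing $k_2 = \lceil \log_2(800\sigma^5)\rceil$ yields $\# L(x,r) \leq N^{k_2} =: M_2$ by identical reasoning.

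I do not expect any substantive obstacle. The only minor issue is making the maximal-separated-set construction well-posed, but this is automatic from the same doubling bound, which certifies a priori that any $r/5$-separated subset of $B(x, 10\sigma^4 r)$ has cardinality at most $N^{k_1}$; so a greedy enumeration terminates after finitely many steps and delivers a maximal set. The whole argument is a standard packing/covering lemma for doubling metric spaces.
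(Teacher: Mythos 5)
Your proof is correct and follows essentially the same route as the paper: produce an $r/5$-separated subset of the annulus whose $r$-balls cover it, then bound its cardinality by the standard packing estimate from the doubling property. The only cosmetic difference is that you build the set as a maximal $r/5$-separated net while the paper invokes the basic $5r$-covering theorem; both are standard and interchangeable here.
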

   \begin{proof}
        First we consider $K(x,r)$. By the basic covering theorem (see \cite[Theorem 1.2]{Hei2001}), there exists $K(x, r) \subset B(x, 10\sigma^4 r) \setminus B(x, 10\sigma^3 r)$ such that 
        \begin{equation*}
            \bigcup_{y \in B(x, 10\sigma^4 r) \setminus B(x, 10\sigma^3 r)} B\biggl(y, \frac{r}{5}\biggr) \subset \bigcup_{y \in K(x,r)} B(y, r)
        \end{equation*}
        and $K(x,r)$ is $\frac{r}{5}$-separated. Since $(S,d)$ is doubling, by induction, there exist $\beta > 0$ and $C > 0$ such that for all $\epsilon \in (0,1)$, $z \in S$ and $r>0$, the maximal number of $\epsilon r$-separated point in $B(z,r)$ is less than or equal to $C\epsilon^{-\beta}$. (See \cite{MacTys2010}.) Set $M_1 = C\left(\frac{1}{50\sigma^4}\right)^{-\beta}$. Then we have that $\# K(x,r) \leq M_1$. We can show similarly for $L(x,r)$ with $M_2 = C\left(\frac{1}{80\sigma^5}\right)^{-\beta}$.
   \end{proof}
   

   Set 
   \begin{gather*}
    \widetilde{H}(x,r) = \Biggl\{\begin{split}
        &\text{there exists } (y,R) \in \support \xi \text{ such that } \\
        &y \in B(x, 100\sigma^6 r)\text{ and } R \geq r
    \end{split} 
    \Biggr\}.
    \end{gather*}
   \begin{lem}\label{lem:Inclusion}
        For all $x \in S$ and $r > 0$, we have
        \begin{equation*}
            G(x, 10\sigma^3 r) \setminus \widetilde{H}(x,r) \subset \bigcup_{y \in K(x,r)}G(y,r) \cap \bigcup_{z \in L(x,r)}G(z,r).
        \end{equation*}
   \end{lem}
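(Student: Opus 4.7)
The plan is to show that on the event $G(x, 10\sigma^3 r) \setminus \widetilde{H}(x, r)$, any witnessing percolative chain can be sliced into two sub-chains certifying $G(y, r)$ for some $y \in K(x, r)$ and $G(z, r)$ for some $z \in L(x, r)$. Fix a chain $\{(z_i, R_i)\}_{i=0}^n$ realizing $G(x, 10\sigma^3 r)$: its centers lie in $B(x, 100\sigma^6 r)$, $B(z_0, R_0)$ meets $B(x, 10\sigma^3 r)$, and $B(z_n, R_n)$ meets $B(x, 90\sigma^5 r) \setminus B(x, 80\sigma^4 r)$. Since $\widetilde{H}(x, r)$ fails, every radius satisfies $R_i < r$, and consequently $d(z_{i-1}, z_i) < R_{i-1} + R_i < 2r$. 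In particular, the map $i \mapsto d(x, z_i)$ changes by at most $2r$ per step, a bound I will use repeatedly.

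For the $K$-factor, take $i_0$ to be the first index with $z_{i_0} \notin B(x, 10\sigma^3 r)$. The $2r$-step bound places $z_{i_0}$ in $B(x, 10\sigma^3 r + 2r) \setminus B(x, 10\sigma^3 r)$, which lies in $B(x, 10\sigma^4 r) \setminus B(x, 10\sigma^3 r)$ and is therefore covered by the radius-$r$ balls around $K(x, r)$; this produces $y \in K(x, r)$ with $z_{i_0} \in B(y, r)$. Next, let $j_0 \geq i_0$ be the first index with $z_{j_0} \notin B(y, 8\sigma r)$; such a $j_0$ exists because $d(y, z_n) \geq 80\sigma^4 r - 10\sigma^4 r > 8\sigma r$. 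Applying the $2r$-step bound once more yields $z_{j_0} \in B(y, 8\sigma r + 2r) \setminus B(y, 8\sigma r) \subset B(y, 9\sigma^2 r) \setminus B(y, 8\sigma r)$, while $z_{i_0}, z_{i_0+1}, \ldots, z_{j_0-1} \in B(y, 8\sigma r) \subset B(y, 10\sigma^3 r)$. The sub-chain $(z_{i_0}, R_{i_0}), \ldots, (z_{j_0}, R_{j_0})$ therefore certifies $G(y, r)$.

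For the $L$-factor I would mirror the construction via a forward-then-backward scan: let $i_1$ be the first index with $z_{i_1} \notin B(x, 80\sigma^4 r)$, which by the same bound lies in $B(x, 80\sigma^5 r) \setminus B(x, 80\sigma^4 r)$ and so admits $z \in L(x, r)$ with $z_{i_1} \in B(z, r)$; then let $j_1 \leq i_1$ be the \emph{largest} index with $z_{j_1} \notin B(z, 8\sigma r)$, which exists since $d(z, z_0) \geq 80\sigma^4 r - 10\sigma^3 r - r \gg 8\sigma r$. The reversed sub-chain from $z_{i_1}$ back to $z_{j_1}$ then witnesses $G(z, r)$ by identical triangle-inequality bookkeeping. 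The main (and really the only) obstacle is arithmetic: one must verify that the constants are tuned so that each boundary crossing lands inside the intended annular cover and each sub-chain stays inside the window $B(\cdot, 10\sigma^3 r)$, i.e., inequalities of the form $10\sigma^3 + 2 \leq 10\sigma^4$, $8\sigma + 2 \leq 9\sigma^2$, and $80\sigma^4 + 2 \leq 80\sigma^5$, all comfortably satisfied once $\sigma > 1$ is even modestly larger than $1$ — which is exactly the reason the specific exponents appear in the definitions of $G$, $K$, $L$, and $\widetilde{H}$.
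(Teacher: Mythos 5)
Your argument is the same first-crossing argument that the paper compresses into three sentences, and the $K$-factor half goes through as written (modulo the arithmetic caveat below). There is, however, one concrete gap in the $L$-factor: the index $i_1$, defined as the first index with $z_{i_1}\notin B(x,80\sigma^4 r)$, need not exist. The event $G(x,10\sigma^3 r)$ only guarantees that the terminal ball $B(z_n,R_n)$ \emph{contains a point} $v$ with $d(x,v)\ge 80\sigma^4 r$; since all radii are $<r$ off $\widetilde{H}(x,r)$, this yields only $d(x,z_n)>80\sigma^4 r-r$, so it is perfectly possible that every center of the chain stays inside $B(x,80\sigma^4 r)$ and your forward scan never terminates. (Contrast with $i_0$, which does exist because $d(x,z_n)>80\sigma^4 r-r>10\sigma^3 r$.) The repair is routine: take instead the first index $i$ for which $B(z_i,R_i)$ meets $S\setminus B(x,80\sigma^4 r)$ (this always exists, with $i=n$ in the worst case), note that the witness point $w$ then satisfies $80\sigma^4 r\le d(x,w)<80\sigma^4 r+3r\le 80\sigma^5 r$, so $w\in B(z,r)$ for some $z\in L(x,r)$ and $B(z_i,R_i)$ meets $B(z,r)$; your backward scan from $i$ then proceeds unchanged. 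You should make this case explicit rather than assuming a center crosses the sphere.

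Second, the inequalities you defer to the end are not automatic for every $\sigma>1$: for instance $8\sigma+2\le 9\sigma^2$ fails for $\sigma<\frac{4+\sqrt{34}}{9}\approx 1.09$, and $10\sigma^3+2\le 10\sigma^4$ fails for $\sigma$ close to $1$ as well, so the "first exit" center can overshoot the thin annulus that $K(x,r)$ (resp.\ $L(x,r)$) covers. This is not fatal, because $\sigma$-uniform perfectness for one $\sigma$ implies it for every larger $\sigma$ and the conclusion of the theorem does not depend on $\sigma$, so one may assume $\sigma\ge 2$ from the outset; but since the hypothesis as stated allows any $\sigma>1$, that reduction has to be said explicitly (the paper's own constants are tuned under the same implicit assumption). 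With these two repairs your proof is a correct, and considerably more detailed, version of the paper's.
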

   \begin{proof}
    Suppose that $G(x, 10\sigma^3 r)\setminus \widetilde{H}(x,r)$ occurs. Then there exists a \pchain $\{(w_i, r_i)\}_{i=0}^n$ that connects $B(x, 10\sigma^3 r)$ and $B(x, 90\sigma^5 r)\setminus B(x, 80\sigma^4 r)$. Then, 
    the \pchain $\{(w_i, r_i)\}_{i=0}^n$ touches one of a ball $B(y,r)$ centered at $K(x,r)$ with radius $r$ and one of a ball $B(z,r)$ centered at $L(x,r)$ with radius $r$. Since $\widetilde{H}(x,r)$ does not occur, we have $r_i \leq r$ for all $i = 0, \dots, n$. Therefore, $G(y,r)$ and $G(z,r)$ occur.
   \end{proof}
   
   \begin{prop}\label{prp:UniformRecursiveEstimate}
        Let $\lambda > 0$. For all $r >0$, we have
        \begin{equation*}
            \sup_{x \in S} \P_\lambda(G(x,10\sigma^3r)) \leq C_1 \biggl(\sup_{x\in S}\P_\lambda\bigl(G(x,r)\bigr)\biggr)^2 + \sup_{x \in S}\P_\lambda(\widetilde{H}(x,r)),
        \end{equation*}
        where $C_1 = M_1M_2$.
   \end{prop}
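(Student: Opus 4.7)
The plan is to combine the geometric inclusion in Lemma \ref{lem:Inclusion} with a union bound over the finite covering sets $K(x,r)$ and $L(x,r)$, and then exploit the spatial independence property of the Poisson point process on disjoint regions.

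First, I would bound
\begin{equation*}
    \P_\lambda\bigl(G(x, 10\sigma^3 r)\bigr) \leq \P_\lambda\bigl(G(x, 10\sigma^3 r)\setminus \widetilde{H}(x,r)\bigr) + \P_\lambda\bigl(\widetilde{H}(x,r)\bigr),
\end{equation*}
and then apply Lemma \ref{lem:Inclusion} to dominate the first term by
\begin{equation*}
    \P_\lambda\Biggl(\bigcup_{y\in K(x,r)} G(y,r)\;\cap\;\bigcup_{z\in L(x,r)} G(z,r)\Biggr)
    \leq \sum_{y\in K(x,r)}\sum_{z\in L(x,r)} \P_\lambda\bigl(G(y,r)\cap G(z,r)\bigr).
\end{equation*}

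The crucial step is to establish independence of $G(y,r)$ and $G(z,r)$ for each pair $(y,z)\in K(x,r)\times L(x,r)$. Observe that the event $G(w,r)$ is measurable with respect to the restriction of $\xi$ to $B(w, 10\sigma^3 r)\times(0,\infty)$, since the percolative chain defining $G(w,r)$ has centers confined to $B(w, 10\sigma^3 r)$. So I need to verify that $B(y, 10\sigma^3 r)\cap B(z, 10\sigma^3 r)=\emptyset$. By Lemma \ref{lem:KeyCovering}, $y\in B(x, 10\sigma^4 r)\setminus B(x, 10\sigma^3 r)$ and $z\in B(x, 80\sigma^5 r)\setminus B(x, 80\sigma^4 r)$, so the reverse triangle inequality gives
\begin{equation*}
    d(y,z)\geq d(x,z)-d(x,y)\geq 80\sigma^4 r - 10\sigma^4 r = 70\sigma^4 r > 20\sigma^3 r,
\end{equation*}
which gives disjointness; independence of $G(y,r)$ and $G(z,r)$ under $\P_\lambda$ then follows from the defining property of the Poisson point process on disjoint Borel sets.

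Finally, I would bound each independent product $\P_\lambda(G(y,r))\P_\lambda(G(z,r))$ by $\bigl(\sup_{w\in S}\P_\lambda(G(w,r))\bigr)^2$, use the cardinality estimates $\#K(x,r)\leq M_1$ and $\#L(x,r)\leq M_2$ from Lemma \ref{lem:KeyCovering} to absorb the double sum into the constant $C_1=M_1M_2$, and take the supremum over $x\in S$ on both sides. The main (and only) obstacle is checking the disjointness numerically, which is what the choice of the annulus radii $10\sigma^3 r$ versus $80\sigma^4 r$ was designed to guarantee; once disjointness holds, the remaining steps are the standard union bound and independence argument.
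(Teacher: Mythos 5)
Your proposal is correct and follows essentially the same route as the paper: decompose via Lemma \ref{lem:Inclusion}, check that $B(y,10\sigma^3 r)$ and $B(z,10\sigma^3 r)$ are disjoint for $y\in K(x,r)$, $z\in L(x,r)$ via the reverse triangle inequality, and combine Poisson independence on disjoint regions with the cardinality bounds $M_1, M_2$. The only (immaterial) difference is that you apply the union bound over pairs before invoking independence pairwise, whereas the paper first factors the probability of the intersection of the two unions and then union-bounds each factor; both yield the constant $C_1 = M_1 M_2$.
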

   \begin{proof}
        We first show that $\bigcup_{y \in K(x,r)} B(y,10\sigma^3r)$ and $\bigcup_{z \in L(x,r)} B(z,10\sigma^3r)$ are disjoint. Let $y \in K(x,r)$ and $z \in L(x,r)$ and take $u \in B(y,10\sigma^3r)$. Suppose that $d(u, z) \leq 10\sigma^3 r$. Then by the triangle inequality, we have $d(y,z) \leq 20\sigma^3r$. On the other hand, since $\sigma > 1$, we have $d(y,z) \geq d(x,z) - d(y,x) \geq 80\sigma^5 r - 10\sigma^4 r = (80\sigma - 10)\sigma^4 r > 70\sigma^4 r > 20\sigma^3 r$, a contradiction. Hence $d(u,z) > 10\sigma^3r$. Therefore, $u \notin B(z, 10\sigma^3 r)$ and this implies $B(y, 10\sigma^3 r) \cap B(z, 10\sigma^3 r) = \emptyset$. Since $y \in K(x,r)$ and $z\in L(x,r)$ are arbitrary, we conclude that $\bigcup_{y \in K(x,r)} B(y,10\sigma^3r)$ and $\bigcup_{z \in L(x,r)} B(z,10\sigma^3r) $ are disjoint. Now, by Lemma \ref{lem:Inclusion}, we have 
        \begin{equation}\label{ine:Inprp:URE:1}
            \P_\lambda (G(x,10\sigma^3r)) \leq \P_\lambda \left(\bigcup_{y \in K(x,r)}G(y,r) \cap \bigcup_{z \in L(x,r)}G(z,r)\right) + \P_\lambda(\widetilde{H}(x,r)).
        \end{equation}
        Since $\bigcup_{y \in K(x,r)}G(y,r)$ and  $\bigcup_{z \in L(x,r)}G(z,r)$ are disjoint, by Lemma \ref{lem:KeyCovering} and the union bound, we estimate 
        \begin{align}\label{ine:Inprp:URE:2}
            &\P_\lambda \left(\bigcup_{y \in K(x,r)}G(y,r) \cap \bigcup_{z \in L(x,r)}G(z,r)\right) \nonumber\\
            &= \P_\lambda\left(\bigcup_{y \in K(x,r)}G(y,r) \right) \P_\lambda \left( \bigcup_{z \in L(x,r)}G(z,r)\right) \nonumber \\
            &\leq M_1M_2 \left(\sup_{v \in S}\P_\lambda(G(v,r))\right)^2.
        \end{align}
        Combining $(\ref{ine:Inprp:URE:1})$ and $(\ref{ine:Inprp:URE:2})$ and taking the supremum, we obtain the desired result. 
   \end{proof}

   We remark that we only use the doubling property and the uniformly perfectness in the above Lemmas. In next three lemmas, we estimate the probabilities of the three events $G(x,r)$, $H(x,r)$, and $\widetilde{H}(x,r)$ by using the upper volume bound. 
   \begin{lem}
    \label{lem:EstimatonOfHtilde}
        For $\lambda, r >0$, we have 
        \begin{equation}
            \sup_{x \in S}\P_\lambda(\widetilde{H}(x,r)) \leq (100\sigma^6)^s \lambda \constV \int_r^\infty R^s \rho(dR). 
        \end{equation}
   \end{lem}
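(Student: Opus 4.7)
The plan is to reduce to a routine moment calculation for the Poisson point process. First I would use the elementary fact that if $N$ is a Poisson random variable with mean $m$, then $\P(N \geq 1) = 1 - e^{-m} \leq m$. Applied to the counting variable $\xi_\lambda\bigl(B(x,100\sigma^6 r) \times [r,\infty)\bigr)$, this gives
\begin{equation*}
    \P_\lambda(\widetilde{H}(x,r)) \leq \lambda \mu\bigl(B(x, 100\sigma^6 r)\bigr) \cdot \rho\bigl([r,\infty)\bigr),
\end{equation*}
since the intensity measure of $\xi_\lambda$ is $(\lambda\mu)\otimes\rho$.

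Next I would invoke the standing upper volume bound $\mu(B(x,t)) \leq \constV t^s$ with $t = 100\sigma^6 r$, giving
\begin{equation*}
    \mu\bigl(B(x, 100\sigma^6 r)\bigr) \leq \constV (100\sigma^6)^s r^s.
\end{equation*}
Finally, since the integrand $R^s$ dominates $r^s$ on the range of integration $R \geq r$, we have
\begin{equation*}
    r^s \rho\bigl([r,\infty)\bigr) = \int_r^\infty r^s \rho(dR) \leq \int_r^\infty R^s \rho(dR).
\end{equation*}
Multiplying the three inequalities together and taking the supremum over $x \in S$ yields the claimed bound. Since the right-hand side is independent of $x$, the supremum is harmless.

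There is no real obstacle here; the estimate is a direct consequence of the Poisson structure combined with the upper Ahlfors bound, and no use of either the doubling property or uniform perfectness is needed. The point worth being careful about is the slight slack introduced by replacing $r^s$ with $R^s$ inside the tail integral, which is precisely what makes the right-hand side summable as $r \to \infty$ under the assumption $\int_0^\infty R^s\rho(dR) < \infty$ that will be used downstream.
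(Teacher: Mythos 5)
Your proof is correct and follows essentially the same route as the paper: bound the probability of at least one Poisson point by the mean of the count, apply the upper volume bound $\mu(B(x,t))\leq \constV t^s$, and trade $r^s$ for $R^s$ inside the tail integral (the paper does this last step by enlarging the ball to radius $100\sigma^6 R$ before applying the volume bound, which is the same estimate in a different order). No issues.
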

   \begin{proof}
       Set $V(x,r) = B(x, 100\sigma^6 r) \times [r, \infty)$. Then, we compute
       \begin{align*}
           \P_\lambda(\widetilde{H}(x,r)) 
           &= \P_\lambda(\xi(V(x,r)) \geq 1) \\
           &\leq \E_\lambda[\xi(V(x,r))] \\
           &= \lambda\mu \otimes \rho (V(x,r)) \\
           &\leq \lambda \int_r^\infty \mu(B(x, 100\sigma^6 R)) \rho(dR) \\
           &\leq \lambda \constV \int_r^\infty (100\sigma^6 R)^s \rho(dR).
       \end{align*}
       Taking the supremum, we get the desired result.
   \end{proof}

   \begin{lem}
    \label{lem:EstimationOfH}
        For $\lambda, r >0$, we have 
        \begin{equation}
            \sup_{x \in S}\P_\lambda(H(x,r)) \leq (10\tau)^s \lambda \constV \int_{\sigma^3r/\tau}^\infty R^s \rho(dR).
        \end{equation}
    \end{lem}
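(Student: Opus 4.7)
The plan is to follow the same Markov/Mecke pattern as in Lemma \ref{lem:EstimatonOfHtilde}, rewriting $H(x,r)$ as the event that a Poisson point falls in a certain subset of $S\times(0,\infty)$, and then evaluating the intensity by Fubini.

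First I would set
\[
    W(x,r) = \left\{(y,R) \in S\times(0,\infty) \setmid y \in B(x,10\sigma^3 r)^c,\ R > \frac{d(x,y)}{10\tau}\right\},
\]
so that $H(x,r) = \{\xi(W(x,r)) \geq 1\}$. By Markov's inequality and the defining property of a Poisson point process,
\[
    \P_\lambda(H(x,r)) \leq \E_\lambda[\xi(W(x,r))] = \lambda\,(\mu\otimes\rho)(W(x,r)).
\]

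Next I would compute $(\mu\otimes\rho)(W(x,r))$ by integrating in $R$ first. For fixed $R$, the slice $\{y : (y,R)\in W(x,r)\}$ equals $\{y : 10\sigma^3 r \leq d(x,y) < 10\tau R\}$, which is empty unless $10\tau R > 10\sigma^3 r$, i.e.\ $R > \sigma^3 r/\tau$. On that range the slice sits inside $B(x,10\tau R)$, so by the upper volume bound its $\mu$-measure is at most $\constV(10\tau R)^s$. Therefore
\[
    (\mu\otimes\rho)(W(x,r)) \leq \int_{\sigma^3 r/\tau}^\infty \constV (10\tau R)^s\,\rho(dR) = \constV(10\tau)^s \int_{\sigma^3 r/\tau}^\infty R^s\,\rho(dR).
\]

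Combining these two displays gives $\P_\lambda(H(x,r)) \leq (10\tau)^s \lambda\constV \int_{\sigma^3 r/\tau}^\infty R^s\,\rho(dR)$ for every $x \in S$, and taking the supremum on the left produces the desired bound. There is essentially no obstacle here beyond bookkeeping: the only subtle point is to identify the correct lower truncation $\sigma^3 r/\tau$ that arises from requiring the annular slice to be non-empty, and to handle the integrand so that the factor $(10\tau)^s$ can be pulled out cleanly.
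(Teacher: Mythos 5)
Your proposal is correct and follows essentially the same argument as the paper: the set you call $W(x,r)$ is the paper's $U(x,r)$, and the chain Markov inequality $\to$ intensity measure $\to$ slice in $R$ contained in $B(x,10\tau R)$ with the lower truncation $R>\sigma^3 r/\tau$ $\to$ upper volume bound is exactly the paper's computation. No gaps.
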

    \begin{proof}
        Set $U(x,r) = \{(y, R) \in S \times (0, \infty) \mid y \in B(x, 10\sigma^3 r)^c, R > \frac{d(x,y)}{10\tau}\}$. Observe that if $(y, R) \in U(x,r)$ then $y \in B(x, 10 \tau R)$ and $R > (\sigma^3 r )/\tau$. Then we estimate 
        \begin{align*}
            \P_\lambda(H(x,r)) 
            &= \P_\lambda(\xi(U(x,r)) \geq 1) \\
            &\leq \E_\lambda[\xi(U(x,r))]\\
            &\leq \lambda \int_{\sigma^3 r /\tau}^\infty \int_{B(x, 10\tau R)} 1\; \mu(dy) \rho(dR) \\
            &\leq \lambda \constV \int_{\sigma^3 r /\tau}^\infty (10\tau R)^s\rho(dR).
        \end{align*}
        Taking the supremum, the proof is over.
    \end{proof}

\begin{lem}
    \label{lem:EstimationOfG}
    Let $\lambda > 0$. For $r>0$, we have
   \begin{equation}
        \sup_{x \in S} \P_\lambda(G(x,r)) \leq (10\sigma^3)^s \lambda \constV r^s.
   \end{equation}
\end{lem}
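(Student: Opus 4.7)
The plan is to bound $\P_\lambda(G(x,r))$ by the probability that the Poisson process places at least one point in the enlarged ball $B(x,10\sigma^3 r)$, and then control this by the volume of that ball using Markov's inequality together with the hypothesis $\mu(B(x,r)) \leq \constV r^s$.

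The key observation is that $G(x,r)$ is defined in terms of a percolative chain $\{(z_i,R_i)\}_{i=0}^n \subset \support \xi$ whose centers all lie in $B(x,10\sigma^3 r)$, with the chain connecting $B(x,r)$ to the annulus $B(x,9\sigma^2 r) \setminus B(x,8\sigma r)$. In particular, if $G(x,r)$ occurs, then such a chain exists and contains at least one Poisson point (namely $(z_0,R_0)$) whose center $z_0$ lies in $B(x,10\sigma^3 r)$. Hence
\begin{equation*}
G(x,r) \;\subset\; \bigl\{\xi\bigl(B(x,10\sigma^3 r) \times (0,\infty)\bigr) \geq 1\bigr\}.
\end{equation*}

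From this set inclusion, I would apply Markov's inequality together with the mean of the Poisson random variable $\xi(B(x,10\sigma^3 r) \times (0,\infty))$, which equals $\lambda\mu(B(x,10\sigma^3 r))$ since $\rho$ is a probability measure. The volume upper bound $\mu(B(x,r)) \leq \constV r^s$ (in force throughout the section) then gives
\begin{equation*}
\P_\lambda(G(x,r)) \leq \lambda\mu(B(x,10\sigma^3 r)) \leq \lambda\constV (10\sigma^3 r)^s = (10\sigma^3)^s \lambda \constV r^s,
\end{equation*}
and taking the supremum over $x \in S$ yields the claim.

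There is essentially no obstacle here: the lemma is a soft estimate used as a base case for the recursion in Proposition \ref{prp:UniformRecursiveEstimate}. The only point that requires a moment of care is to unwind the definitions and verify that the event $G(x,r)$ does force the existence of at least one Poisson point with center in $B(x,10\sigma^3 r)$, which follows directly from the requirement $\{z_i\}_{i=0}^n \subset B(x,10\sigma^3 r)$ in the definition of percolation ``in $A$''.
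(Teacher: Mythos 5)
Your proposal is correct and is essentially the paper's own argument: the paper also bounds $G(x,r)$ by the event that the projected process $\eta = \xi_\lambda\circ\Pi^{-1}$ (equivalently, $\xi(B(x,10\sigma^3 r)\times(0,\infty))$) puts at least one point in $B(x,10\sigma^3 r)$, then applies Markov's inequality and the volume upper bound. No differences worth noting.
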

\begin{proof}
    Let $\Pi \colon S \times (0,\infty) \to S$ be the projection and Set $\eta = \xi_\lambda \circ \Pi^{-1}$. Then $\eta$ is a Poisson point process with intensity measure $\lambda \mu$. Note that if $G(x,r)$ occurs then $\eta(B(x, 10\sigma^3 r)) \geq 1$. Therefore we have 
    \begin{align*}
        \P_\lambda(G(x,r)) &\leq \P_\lambda(\eta(B(x, 10\sigma^3 r)) \geq 1) \\
        &\leq \E_\lambda[\eta(B(x, 10\sigma^3 r))] \\
        &= \lambda \mu(B(x, 10\sigma^3 r)) \\
        &\leq (10\sigma^3 r)^s \lambda \constV,
    \end{align*} 
    we get the desired result after taking the supremum.
\end{proof}

    \begin{prop}
        Let $\lambda > 0$. Let $C_1>0$ be a constant appeared in Proposition \ref{prp:UniformRecursiveEstimate}. For $r > 0$,
        We have 
            \begin{equation}\label{ine:KeyScalingInequality} 
                \sup_{x\in S} \P_\lambda(G(x, 10\sigma^3r)) \leq C_1 (\sup_{x \in S} \P_\lambda(G(x, r)))^2 + (100\sigma^6)^s\lambda\constV\int_r^\infty R^s \rho(dR)
            \end{equation}
        and 
        \begin{equation}\label{ine:KeyClusterDistributionEstimation}
            \sup_{x\in S}\P_\lambda(M(x) > 9\sigma^2 r) \leq \sup_{x\in S}\P_\lambda(G(x,r)) + (10\tau)^s \lambda \constV\int_{\sigma^3 r /\tau}^\infty R^s \rho(dR).
        \end{equation}
    \end{prop}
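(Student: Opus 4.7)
The plan is to note that both inequalities in the proposition are essentially immediate consequences of results already in hand, so the proof amounts to bookkeeping rather than new arguments. The main task is to verify that constants and integration limits line up correctly.

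For inequality $(\ref{ine:KeyScalingInequality})$, I would invoke Proposition \ref{prp:UniformRecursiveEstimate} directly to get
\begin{equation*}
\sup_{x\in S}\P_\lambda(G(x,10\sigma^3r)) \leq C_1\bigl(\sup_{x\in S}\P_\lambda(G(x,r))\bigr)^2 + \sup_{x\in S}\P_\lambda(\widetilde{H}(x,r)),
\end{equation*}
and then simply substitute the bound $\sup_{x\in S}\P_\lambda(\widetilde{H}(x,r)) \leq (100\sigma^6)^s \lambda\constV \int_r^\infty R^s\rho(dR)$ supplied by Lemma \ref{lem:EstimatonOfHtilde}. No further argument is needed; the two ingredients fit together verbatim.

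For inequality $(\ref{ine:KeyClusterDistributionEstimation})$, I would start from the deterministic inclusion
\begin{equation*}
\{M(x) > 9\sigma^2 r\} \subset G(x,r) \cup H(x,r)
\end{equation*}
proven in Lemma \ref{lem:ImplicationOfLongCluster}, apply $\P_\lambda$ and the union bound to obtain $\P_\lambda(M(x) > 9\sigma^2 r) \leq \P_\lambda(G(x,r)) + \P_\lambda(H(x,r))$, then take the supremum over $x\in S$. The second term is then controlled by Lemma \ref{lem:EstimationOfH}, which yields the factor $(10\tau)^s \lambda\constV \int_{\sigma^3 r/\tau}^\infty R^s\rho(dR)$ exactly as required.

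There is no genuine obstacle here: the proposition is a packaging statement whose sole purpose is to consolidate the recursive estimate (used later to iterate) and the tail bound on $M(x)$ (used to deduce the conclusion of Theorem \ref{thm:Subcritical2}) into two clean inequalities in terms of $\sup_x \P_\lambda(G(x,r))$ alone. The only point worth checking is that, in each display, the integration threshold matches the radius restriction appearing in the corresponding event: $r$ in $\widetilde{H}(x,r)$ (so the integral runs from $r$) and $\sigma^3 r/\tau$ in $H(x,r)$ (so the integral runs from $\sigma^3 r/\tau$). Both thresholds coincide with those already derived inside Lemmas \ref{lem:EstimatonOfHtilde} and \ref{lem:EstimationOfH}, so the substitution is direct.
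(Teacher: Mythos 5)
Your proposal is correct and coincides with the paper's own proof: inequality $(\ref{ine:KeyScalingInequality})$ is obtained by substituting the bound of Lemma \ref{lem:EstimatonOfHtilde} into Proposition \ref{prp:UniformRecursiveEstimate}, and inequality $(\ref{ine:KeyClusterDistributionEstimation})$ follows from the inclusion of Lemma \ref{lem:ImplicationOfLongCluster}, the union bound, and Lemma \ref{lem:EstimationOfH}. No further comment is needed.
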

    \begin{proof}
        The inequality $(\ref{ine:KeyScalingInequality})$ follows from Proposition \ref{prp:UniformRecursiveEstimate} and Lemma \ref{lem:EstimatonOfHtilde}. The inequality $(\ref{ine:KeyClusterDistributionEstimation})$ follows from Lemma \ref{lem:ImplicationOfLongCluster} and Lemma \ref{lem:EstimationOfH}.
    \end{proof}

    The following lemma is proved similarly as in \cite[Lemma 3.7.]{Gou2008}. (We only need replacing $10$ by $10c$.)
    \begin{lem}\label{lem:TechnicalLem}
       Let $c > 1$. Let $f,g \colon [1, \infty) \to [0,\infty)$ be bounded measurable functions satisfying the following properties:
       \begin{enumerate}[label=(\alph*)]
        \item $f|_{[1, 10c]} \leq \frac{1}{2}$ and $g \leq \frac{1}{4}$,
        \item for all $r \geq 10c$, $f(r) \leq f(\frac{r}{10c})^2 + g(r)$.
       \end{enumerate}
       Then $\lim_{r \to \infty}g(r) = 0$ implies $\lim_{r \to \infty} f(r) = 0$. Moreover, if there exists $\theta > 0$ such that the integral $\int_1^\infty r^{\theta-1}g(r) dr$ is finite, then the integral $\int_1^\infty r^{\theta-1}f(r) dr$ is also finite.
    \end{lem}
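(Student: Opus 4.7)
The plan follows the bootstrap template of Gou\'{e}r\'{e}'s Lemma 3.7 with $10$ replaced by $10c$.

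\textbf{Global bound and first assertion.} First I would upgrade (a) to the global bound $f \leq 1/2$ on $[1, \infty)$ by induction on the blocks $[(10c)^n, (10c)^{n+1}]$: if $f \leq 1/2$ on $[1, (10c)^n]$, then (b) combined with $g \leq 1/4$ gives $f(r) \leq (1/2)^2 + 1/4 = 1/2$ on the next block. For the first assertion, let $L := \limsup_{r \to \infty} f(r) \in [0, 1/2]$. Since $r \to \infty$ iff $r/(10c) \to \infty$ and $g(r) \to 0$, taking $\limsup$ in (b) gives $L \leq L^2$, which together with $L \leq 1/2$ forces $L = 0$.

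\textbf{Second assertion: setup.} Set $F(T) := \int_1^T r^{\theta - 1} f(r)\, dr$ and $G := \int_1^\infty r^{\theta - 1} g(r)\, dr$. Integrating (b) over $[10c, T]$ and substituting $u = r/(10c)$ in the quadratic term yields
\begin{equation*}
F(T) \leq C_0 + (10c)^\theta \int_1^{T/(10c)} u^{\theta - 1} f(u)^2\, du + G,
\end{equation*}
where $C_0$ bounds the contribution from $[1, 10c]$. Using only $f \leq 1/2$ (so $f^2 \leq f/2$) gives $F(T) \leq C_0 + ((10c)^\theta/2)\, F(T) + G$, which closes by absorption when $(10c)^\theta < 2$ but fails for larger $\theta$.

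\textbf{Second assertion: iterative contraction.} To handle arbitrary $\theta$, introduce $\phi_k(u) := f(u)^{2^k}$. Repeatedly squaring (b) together with the bounds $\phi_k \leq 2^{-2^k}$ and $g \leq 1/4$ produces two families of inequalities:
\begin{enumerate}[label=(\roman*)]
    \item an inter-level inequality $\phi_j(u) \leq \phi_{j+1}(u/(10c)) + D_j\, g(u)$ with $D_j$ bounded,
    \item a self-recursion $\phi_k(u) \leq 2^{-2^k}\, \phi_k(u/(10c)) + B_k\, g(u)$.
\end{enumerate}
For the given $\theta$, choose $k$ large enough that $2^{-2^k} (10c)^\theta < 1$. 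Iterating (ii) to depth $n \sim \log_{10c}(u)$ and using $\phi_k \leq 2^{-2^k}$ at the base bounds $\int_1^\infty u^{\theta - 1} \phi_k(u)\, du$ by a convergent geometric series in $G$. Chaining (i) $k$ times (each step picking up a factor $(10c)^\theta$) transfers this to the desired uniform bound on $F(T)$.

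\textbf{Main obstacle.} The principal subtlety is the absorption step for large $\theta$: the naive contraction $1/2$ coming from $f \leq 1/2$ is too weak, so one must exploit the quadratic feedback in (b) more forcefully. Passing to $\phi_k = f^{2^k}$ sharpens the contraction from $1/2$ to $2^{-2^k}$, which decays doubly exponentially in $k$ and eventually dominates $(10c)^\theta$ for any fixed $\theta$. Balancing the choice of $k$ against $\theta$ and carefully tracking the growing constants in the chain of inter-level inequalities constitutes the quantitative core of the argument.
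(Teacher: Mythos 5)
Your argument is correct, but for the second assertion it takes a genuinely different route from the one the paper points to (the paper does not prove the lemma itself; it refers to Gou\'er\'e's Lemma 3.7, ``replacing $10$ by $10c$''). Your treatment of the first assertion coincides with that proof: the inductive upgrade to $f\le 1/2$ on all of $[1,\infty)$ and the $\limsup$ argument $L\le L^2$, $L\le 1/2\Rightarrow L=0$ are exactly the standard steps. For the integral statement, Gou\'er\'e's route is shorter: it \emph{uses} the first assertion to find $A$ with $f(r)\le \epsilon$ for $r\ge A$, where $\epsilon(10c)^\theta<1$, linearizes $f(r/(10c))^2\le \epsilon f(r/(10c))$ beyond $10cA$, and closes the block recursion $\int_{(10c)^{n+1}A}^{(10c)^{n+2}A} r^{\theta-1}f \le \epsilon(10c)^\theta\int_{(10c)^{n}A}^{(10c)^{n+1}A}r^{\theta-1}f + \int r^{\theta-1}g$ by absorption; this implicitly assumes $g\to 0$ (which holds in the paper's application, where $g(r)$ is a constant times $\int_r^\infty R^s\rho(dR)$). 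Your alternative via $\phi_k=f^{2^k}$ avoids invoking the first assertion entirely: the inter-level inequality holds with $D_j=2^{j+1-2^j}\le 2$ (from $(a+b)^m\le a^m+m2^{1-m}b$ for $a+b\le 1/2$), the self-recursion has contraction $2^{-2^k}$, and choosing $k$ with $2^{-2^k}(10c)^\theta<1$ makes the geometric sums converge, after which chaining down the $k$ levels costs only the finite factor $(10c)^{k\theta}$ and finitely many multiples of $G$. This buys a version of the second assertion that needs only the integrability of $r^{\theta-1}g$, not $g\to 0$, at the price of noticeably heavier bookkeeping; both proofs are valid for the lemma as used in the paper.
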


    \textit{Proof of the Theorem \ref{thm:Subcritical2}.}
    Set $c = \sigma^3$,  $f(r) = C_1\sup_{x\in S}\P_\lambda(G(x,r))$ and $g(r) = C_1(100\sigma^6)^s\lambda\constV\int_r^\infty R^s \rho(dR)$. Then by $(\ref{ine:KeyScalingInequality})$, we have $f(r) \leq f(\frac{r}{10c})^2 + g(r)$ for all $r \geq 10c$. Set 
    \begin{equation*}
        \lambda_0 = \min\left\{\frac{1}{2}(10\sigma^3)^sC_1\constV ,\:\, \frac{1}{4}(100\sigma^6)^s C_1\constV \int_0^\infty R^s \rho(dR)\right\}.
    \end{equation*}
    Let $\lambda \in (0, \lambda_0)$. Then by Lemma \ref{lem:EstimationOfG} and Lemma \ref{lem:EstimationOfH}, we have $f|_{[1, 10c]} \leq \frac{1}{2}$ and $g \leq \frac{1}{4}$. Moreover, $\lim_{r \to \infty}g(r) = 0$. Therefore, we can apply Lemma \ref{lem:TechnicalLem} and we have $\lim_{r \to \infty}f(r) = 0$. This implies that $\lim_{r \to \infty}\sup_{x \in S} \P_\lambda(G(x,r)) = 0$. By $(\ref{ine:KeyClusterDistributionEstimation})$, we have $\lim_{r \to \infty} \sup_{x \in S}\P_\lambda(M(x) > 9 \sigma^2 r) = 0$.
    Now we suppose that there exists $\beta>0$ such that $\int_0^\infty R^{s+\beta}\rho(dR) <\infty$. Let $\lambda \in (0, \lambda_0)$. Then by Fubini's theorem, we have 
    \begin{align*}
        \beta\int_1^\infty r^{\beta-1}\int_r^\infty R^s\rho(dR)dr = \int_1^\infty R^s(R^\beta -1) \rho(dR) \leq \int_0^\infty R^{s+\beta} < \infty. 
    \end{align*}
    Hence by Lemma \ref{lem:TechnicalLem}, we have that $\int_1^\infty r^{\beta-1}f(r)dr < \infty$.
    Since 
    \begin{align*}
        \sup_{x \in S}\E_\lambda[M(x)^\beta]
        &= \sup_{x\in S}\beta\int_0^\infty r^{\beta-1}\P_\lambda(M(x) \geq r) dr\\
        &\leq \beta\int_0^\infty r^{\beta-1}\sup_{x\in S}\P_\lambda(M(x) \geq r)dr,
    \end{align*}
    by $(\ref{ine:KeyClusterDistributionEstimation})$,  
    we conclude that $\sup_{x \in S}\E_\lambda[M(x)^\beta]$ is finite.

    \section{Nonexistence of the subcritical behavior}
    In this section, we show that the nonexistence of the subcritical behaviors. 
    \begin{lem}\label{lem:CoveringOneLargeBall}
       Let $(S,d,\mu)$ be $s$-Ahlfors regular. Let $\rho$ be a probability measure on $(0,\infty)$. Then for all $\lambda > 0$, $o \in S$, and $r>0$, we have 
       \begin{align*}
            &\P_\lambda(\text{There exists } (x,R) \in \support \xi \text{ such that } B(o, r) \subset B(x,R)) \nonumber \\
            &\geq 1 - \exp\biggl(- \frac{\lambda}{2^s\constV} \int_{2r}^\infty R^s \rho(dR) \biggr).
       \end{align*}
    \end{lem}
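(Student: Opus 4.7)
The plan is to exhibit a simple geometric sufficient condition under which a single Poisson point $(x,R)$ already covers $B(o,r)$, bound the $\mu\otimes\rho$-mass of the set of such $(x,R)$ from below using the lower Ahlfors bound, and then apply the Poisson void formula. The geometric observation is that if $d(x,o)\le R-r$, then for every $y\in B(o,r)$ the triangle inequality gives $d(x,y)\le d(x,o)+d(o,y)<(R-r)+r=R$, so $y\in B(x,R)$; hence $B(o,r)\subset B(x,R)$. Accordingly, I would set
\begin{equation*}
V=\bigl\{(x,R)\in S\times(0,\infty):R\ge 2r,\; d(x,o)\le R-r\bigr\}
\end{equation*}
and observe that $\{\xi(V)\ge 1\}$ is contained in the event whose probability we wish to bound. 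The requirement $R\ge 2r$ is imposed only so that $R-r\ge R/2$, which is what produces the factor $2^s$ in the statement.

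Next I would compute $\mu\otimes\rho(V)=\int_{[2r,\infty)}\mu(\{x:d(x,o)\le R-r\})\,\rho(dR)$ by Fubini and then estimate using the lower bound in the $s$-Ahlfors regularity inequality $(\ref{ine:AhlforsRegular})$ followed by $R-r\ge R/2$:
\begin{equation*}
\mu\otimes\rho(V)\ge \constV^{-1}\int_{2r}^\infty (R-r)^s\,\rho(dR)\ge \frac{1}{2^s\constV}\int_{2r}^\infty R^s\,\rho(dR).
\end{equation*}
Then, since $\xi(V)$ is a Poisson random variable with parameter $\lambda\mu\otimes\rho(V)$, the Poisson void formula gives
\begin{equation*}
\P_\lambda(\xi(V)\ge 1)=1-\exp\bigl(-\lambda\mu\otimes\rho(V)\bigr)\ge 1-\exp\biggl(-\frac{\lambda}{2^s\constV}\int_{2r}^\infty R^s\,\rho(dR)\biggr),
\end{equation*}
and since $\{\xi(V)\ge 1\}$ implies the covering event, the conclusion follows.

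There is no genuine obstacle here; the argument is essentially a one-shot application of Ahlfors regularity combined with the basic Poisson void probability. The only points worth flagging are that only the lower bound in $(\ref{ine:AhlforsRegular})$ is used (consistent with the introductory remark that the lower bound enters only on the nonexistence side), and that the non-strict condition $d(x,o)\le R-r$ suffices because $B(o,r)$ is open, which is what lets the clean threshold $R\ge 2r$ (yielding the factor $2^s$) work without any further loss.
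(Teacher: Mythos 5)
Your proposal is correct and follows essentially the same route as the paper: restrict to pairs $(x,R)$ with $d(x,o)<R-r$ (the paper's $A_r$ is just your $V$ without the a priori cutoff $R\ge 2r$, which the paper introduces afterwards by shrinking the domain of integration), bound $\mu\otimes\rho$ of that set from below via the lower Ahlfors bound and $R-r\ge R/2$, and finish with the Poisson void probability. The minor differences (non-strict versus strict inequality in the definition of the set, and where the restriction to $R\ge 2r$ is imposed) are immaterial.
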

    \begin{proof}
        Set $A_r = \{(x, R) \in S \times (0,\infty) \mid B(o, r) \subset B(x, R)\}$. We have
        \begin{align*}
            \mu \otimes \rho (A_r) &\geq \mu \otimes \rho (\{(x, R) \mid d(o, x) < R-r\}) \\
            &= \int_r^\infty \int_{B(o, R-r)} 1\; \mu(dx)\rho(dR)\\
            &\geq  \int_{2r}^\infty \int_{B(o, R-r)} 1\; \mu(dx)\rho(dR)\\
            &\geq \int_{2r}^\infty \int_{B(o, \frac{R}{2})} \mu(dx)\rho(dR)\\
            &= \int_{2r}^\infty\mu\biggl(B\biggl(o, \frac{R}{2}\biggr)\biggr)\rho(dR)\\
            &\geq \frac{1}{2^s\constV}\int_{2r}^\infty R^s\rho(dR).
        \end{align*} 
        Therefore, we estimate that
        \begin{align*}
            &\P_\lambda(\text{There exists } (x,R) \in \support \xi \text{ such that } B(o, r) \subset B(x,R)) \\
            &=  \P_\lambda(\xi(A_r) \geq 1) \\
            &= 1 - \exp(-\lambda\mu\otimes\rho(A_r)) \\
            &\geq 1 - \exp\biggl(- \frac{\lambda}{2^s\constV} \int_{2r}^\infty R^s \rho(dR) \biggr).
        \end{align*}
    \end{proof}
    
    \begin{prop}\label{pro:WholeCover}
        Let $\lambda > 0$. Let $(S,d, \mu)$ be an $s$-Ahlfors regular space. Let $\rho$ be a probability measure on $(0, \infty)$. Then $\P_\lambda(\BP = S) = 1$ if and only if $\int_0^\infty R^s \rho(dR) = \infty$.
    \end{prop}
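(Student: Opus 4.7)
The plan is to prove the two implications separately: the ``if'' direction via Lemma \ref{lem:CoveringOneLargeBall} and the separability of $S$, and the ``only if'' direction via a direct first-moment computation applied at a single fixed point, using the upper Ahlfors bound.

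For the ``if'' direction, assume $\int_0^\infty R^s \rho(dR) = \infty$. I would first observe that this forces $\int_{2r}^\infty R^s \rho(dR) = \infty$ for every $r > 0$, since the truncated part satisfies $\int_0^{2r} R^s \rho(dR) \leq (2r)^s < \infty$. Applying Lemma \ref{lem:CoveringOneLargeBall}, for every $o \in S$ and every $r > 0$ we get, almost surely, the existence of $(x,R) \in \support \xi$ with $B(o, r) \subset B(x, R)$, and in particular $B(o, r) \subset \BP$. Since $(S,d)$ is separable, I would fix a countable dense subset $\{o_n\}_{n \in \mathbb{N}} \subset S$. Taking a countable intersection of the probability-one events $\{B(o_n, 1) \subset \BP\}$ yields
\begin{equation*}
\P_\lambda\Bigl(\bigcup_{n} B(o_n, 1) \subset \BP \Bigr) = 1.
\end{equation*}
Density implies $\bigcup_n B(o_n, 1) = S$, so $\P_\lambda(\BP = S) = 1$.

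For the ``only if'' direction, I would argue by contraposition. Assume $\int_0^\infty R^s \rho(dR) < \infty$, fix any $o \in S$, and set $A = \{(x,R) \in S \times (0,\infty) \mid d(o,x) < R\}$. By Fubini and the upper Ahlfors bound,
\begin{equation*}
\mu \otimes \rho(A) = \int_0^\infty \mu(B(o, R)) \rho(dR) \leq \constV \int_0^\infty R^s \rho(dR) < \infty.
\end{equation*}
Since $\{o \in \BP\} = \{\xi_\lambda(A) \geq 1\}$ and $\xi_\lambda(A)$ is Poisson with finite parameter $\lambda \mu \otimes \rho(A)$, we obtain $\P_\lambda(o \in \BP) = 1 - \exp(-\lambda \mu \otimes \rho(A)) < 1$, and hence $\P_\lambda(\BP = S) \leq \P_\lambda(o \in \BP) < 1$.

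I do not anticipate a real obstacle here: the only mild subtlety is the reduction from the uncountable family of almost-sure coverings $\{B(o,r) \subset \BP\}_{o \in S, r > 0}$ to a countable one, which separability of $S$ disposes of at once.
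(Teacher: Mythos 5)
Your proof is correct and follows essentially the same route as the paper: Lemma \ref{lem:CoveringOneLargeBall} plus a countable exhaustion of $S$ for the ``if'' direction, and the Poisson void probability at a single point with the upper Ahlfors bound for the ``only if'' direction. The only cosmetic differences are that the paper exhausts $S$ by the increasing balls $B(o,n)$ around one base point rather than by balls around a countable dense set, and states the second implication directly rather than by contraposition.
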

    \begin{proof}
        Fix a base point $o \in S$. 
        Suppose that $\int_0^\infty R^s \rho(dR) = \infty$. 
        Then, by Lemma \ref{lem:CoveringOneLargeBall}, we have $\P_\lambda(\xi(A_n) \geq 1) = 1$ for all $n \in \mathbb{N}$, hence we have $\BP \supset B(o, n)$ for all $n \in \mathbb{N}$ for almost surely. This implies that $\P_\lambda(\mathscr{W} = S) = 1$. Conversely, suppose that $\P_\lambda(\mathscr{W} = S) = 1$. Since $\{\mathscr{W} = S\} \subset \{o \in \mathscr{W}\}$, we have $\P_\lambda(o \in \mathscr{W}) = 1$. On the other hand, we have 
        \begin{align*}
            1 &= \P_\lambda(o \in \mathscr{W}) = \P_\lambda ( \exists (x, R)\in \support\xi \text{ such that } o \in B(x,R)) \\ 
            &= \P_\lambda(\exists (x, R)\in \support\xi \text{ such that } x \in B(o, R)) \\
            &= 1 - \exp{\left(-\int_0^\infty \int_{B(o,R)} 1 \; \mu(dx)\rho(dR)\right)}\\
            &= 1 - \exp{\left(-\int_0^\infty \mu(B(o,R))\rho(dR)\right)}\\
            &\leq 1- \exp{\left(-\constV\int_0^\infty R^s \rho(dR)\right)}.   
        \end{align*}
        Hence, we conclude $ \int_0^\infty R^s \rho(dR) = \infty$.
    \end{proof}
    
    Next, we prove the nonexistence of subcritical behavior in terms of the expected size of the cluster.  
    
    \begin{prop}\label{pro:NonexistenceSubcriticalBehavior}
       Let $(S,d,\mu)$ be an $s$-Ahlfors regular space. Let $\rho$ be a probability measure on $(0,\infty)$. Suppose that there exists a positive number $\beta > 0$ such that $\int_{0}^\infty R^{s+\beta}\rho(dR) = \infty$ holds. Then for all $\lambda > 0$ and $o \in S$, we have $\E_\lambda[M(o)^\beta] = \infty$. 
    \end{prop}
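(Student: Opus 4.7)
The plan is to use Lemma~\ref{lem:CoveringOneLargeBall} as the main input: with positive probability a single Poisson ball $B(x,R)$ engulfs $B(o,r)$, and this alone forces a piece of $\cluster(o)$ reaching distance at least $r/\sigma$ from $o$, by uniform perfectness. Integrating over $r$ will then yield the desired moment divergence.

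First I would verify the geometric step. If some $(x,R)\in\support\xi$ satisfies $B(o,r)\subset B(x,R)$, then every $y\in B(o,r)$ is connected to $o$ by the trivial \pchain $\{(x,R)\}$, so $B(o,r)\subset\cluster(o)$. Since $(S,d,\mu)$ is $s$-Ahlfors regular, $(S,d)$ is $\sigma$-uniformly perfect for some $\sigma>1$ by Lemma~1.3, so $B(o,r)\setminus B(o,r/\sigma)$ is nonempty and therefore $M(o)\geq r/\sigma$. Combining this with Lemma~\ref{lem:CoveringOneLargeBall},
\begin{equation*}
    \P_\lambda\bigl(M(o)\geq r/\sigma\bigr)\;\geq\;1-\exp\bigl(-c\,I(r)\bigr),\qquad c:=\tfrac{\lambda}{2^s\constV},\quad I(r):=\int_{2r}^\infty R^s\rho(dR).
\end{equation*}
The layer-cake identity together with the change of variable $t=r/\sigma$ then yields
\begin{equation*}
    \E_\lambda[M(o)^\beta]\;=\;\beta\int_0^\infty t^{\beta-1}\P_\lambda(M(o)\geq t)\,dt\;\geq\;\beta\sigma^{-\beta}\int_0^\infty r^{\beta-1}\bigl(1-e^{-cI(r)}\bigr)\,dr,
\end{equation*}
reducing matters to showing that the last integral diverges.

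To close the argument I would split into two cases. If $\int_0^\infty R^s\rho(dR)=\infty$, then since $\int_0^{2r}R^s\rho(dR)\leq(2r)^s<\infty$, all the divergence sits in the tail and $I(r)\equiv\infty$, so the integrand is identically $1$ and the integral diverges at $\infty$. If instead $\int_0^\infty R^s\rho(dR)<\infty$, then $I(r)\to 0$, and for all $r\geq r_0$ with $cI(r_0)\leq 1$ one has $1-e^{-cI(r)}\geq cI(r)/2$; Fubini gives
\begin{equation*}
    \int_0^\infty r^{\beta-1}I(r)\,dr\;=\;\int_0^\infty R^s\int_0^{R/2}r^{\beta-1}\,dr\,\rho(dR)\;=\;\frac{1}{\beta\,2^\beta}\int_0^\infty R^{s+\beta}\rho(dR)\;=\;\infty,
\end{equation*}
while $\int_0^{r_0}r^{\beta-1}I(r)\,dr\leq I(0)\,r_0^\beta/\beta<\infty$, so the tail from $r_0$ diverges. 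The only nontrivial step is the geometric translation from ``engulfed by a large ball'' to $M(o)\geq r/\sigma$: without uniform perfectness the annulus $B(o,r)\setminus B(o,r/\sigma)$ could collapse and the lower bound on $M(o)$ would be lost. Everything else is a routine Fubini-and-layer-cake computation.
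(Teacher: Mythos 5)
Your proof is correct and follows essentially the same route as the paper: Lemma \ref{lem:CoveringOneLargeBall} (engulfing of $B(o,r)$ by a single Poisson ball) combined with the layer-cake formula and a Fubini computation converting $\int_0^\infty r^{\beta-1}\int_{2r}^\infty R^s\rho(dR)\,dr$ into the $(s+\beta)$-moment, with the only cosmetic differences being that you handle the case $\int_0^\infty R^s\rho(dR)=\infty$ inline rather than via Proposition \ref{pro:WholeCover}, and you linearize $1-e^{-u}$ locally for small $u$ instead of using the global inequality $(1-e^{-a})b\geq(1-e^{-b})a$. In fact you are slightly more careful than the paper on the geometric step: the paper passes directly from $B(o,r)\subset B(x,R)$ to $M(o)\geq r$, whereas your use of uniform perfectness to conclude only $M(o)\geq r/\sigma$ is what is actually justified in a general Ahlfors regular space, and the extra factor $\sigma^{-\beta}$ is harmless for the divergence.
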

    \begin{proof}
        If $\int_0^\infty r^s \rho(ds) = \infty$, we know that $\BP = S$ almost surely and thus $\E_\lambda[M(o)^\beta] = \infty$. Therefore we consider the case that $\int_0^\infty r^s \rho(ds) < \infty$. 
        We first note that by Fubini's theorem, for all $p,q > 0$, the following identity
        \begin{align}\label{eqn:Cavalieri}
            \int_0^\infty R^{p+q} \rho(dR) = p\int_0^\infty  r^{p-1}\Biggl(\int_r^\infty R^q \rho(dR)\Biggr)dr
        \end{align}
        holds.
        Set $C = \frac{\lambda}{2^s\constV}\int_0^\infty R^s \rho(dR)$. By Lemma \ref{lem:CoveringOneLargeBall}, we estimate
        \begin{align}\label{ine:Inpro:NonexistenceSubcriticalBehavior:EstCoverdProb}
            &\P_\lambda(\exists(x,R) \in \support \xi \text{ such that } B(o, r) \subset B(x,R)) \nonumber \\
            &\geq 1 - \exp\biggl(- \frac{\lambda}{2^s\constV} \int_{2r}^\infty R^s \rho(dR) \biggr)\nonumber \\
            &= C^{-1}\Biggl(1 - \exp\biggl(-\frac{\lambda}{2^s\constV}\int_{2r}^\infty R^s \rho(dR)\biggr)\Biggr)\frac{\lambda}{2^s\constV}\int_{0}^\infty R^s\rho(dR)\nonumber \\
            &\geq C^{-1}\Biggl(1 - \exp\biggl(-\frac{\lambda}{2^s\constV}\int_{0}^\infty R^s\rho(dR)\biggr)\Biggr)\frac{\lambda}{2^s\constV}\int_{2r}^\infty R^s\rho(dR)\nonumber\\
            &= C^{-1}(1 - \exp(-C))\frac{\lambda}{2^s\constV}\int_{2r}^\infty R^s\rho(dR),
        \end{align}
        where we used the inequality $(1-e^{-a})b \geq (1-e^{-b})a$ for $0 \leq a \leq b$ in the forth line. Combining $(\ref{eqn:Cavalieri})$ and $(\ref{ine:Inpro:NonexistenceSubcriticalBehavior:EstCoverdProb})$, we have 
        \begin{align*}
            &\E_\lambda[M(o)^\beta] 
            = \beta\int_0^\infty r^{\beta-1}\P_\lambda(M(o)\geq r)dr \\
            &\geq \beta\int_0^\infty r^{\beta-1}\P_\lambda(\exists(x,R) \in \support \xi \text{ such that } B(o, r) \subset B(x,R))dr \\
            &\geq \beta \int_0^\infty r^{\beta-1}\Biggl(C^{-1}(1-e^{-C}) \frac{\lambda}{2^s\constV}\int_{2r}^\infty R^s\rho(dR) \Biggr)dr \\
            &= \frac{\beta \lambda C^{-1}(1-e^{-C})}{2^s\constV}\int_0^\infty r^{\beta-1}\Biggl(\int_{2r}^{\infty}R^s\rho(dR)\Biggr)dr\\
            &= \frac{\beta \lambda C^{-1}(1-e^{-C})}{2^{s+\beta}\constV}\int_0^\infty r^{\beta-1}\Biggl(\int_{r}^{\infty}R^s\rho(dR)\Biggr)dr\\
            &= \frac{\lambda C^{-1}(1-e^{-C})}{2^{s+\beta}(s+\beta)\constV} \int_0^\infty R^{s+\beta}\rho(dR) = \infty,
        \end{align*} 
        which is the desired result.
    \end{proof}
    Combining Theorem $\ref{thm:Subcritical2}$, Proposition \ref{pro:WholeCover}, and Proposition \ref{pro:NonexistenceSubcriticalBehavior}, we prove Theorem \ref{thm:Subcritical}.

    \section{Applications}
    In this section, we show that our results apply to a wide range of metric measure spaces. Several of these examples, to our knowledge, have not been previously appeared.
    \subsection{Poisson Boolean percolation on Riemannian manifold with nonnegative Ricci curvature}
    Let $S$ be a complete, connected, and unbounded $n$-dimensional Riemannian manifold with nonnegative Ricci curvature. Since $S$ is connected, $S$ is uniformly perfect. (See \cite[Section 1.3.2]{MacTys2010}.) 
    Let $\mu$ be the volume measure of $S$. 
    It is known that the volume growth of a Riemannian manifold with nonnegative curvature is bounded above by that of the Euclidean space (see \cite[Theorem III.4.4.]{Cha2006} for example). Moreover, by the Bishop-Gromov inequality, there exists $C>0$ such that 
    \begin{align*}
        \mu(B(x,R)) \leq C\biggl(\frac{R}{r}\biggr)^n \mu(B(x,r))
    \end{align*}
    holds for all $x\in S$, and  $r,R >0$. This implies that the volume doubling property $\mu(B(x,2r)) \leq 2^n \mu(B(x,r))$ and it follows that $(S,d)$ is doubling. (See \cite[Lemma B.3.4.]{Gro2007}.) 
    Therefore, we can apply Theorem \ref{thm:Subcritical2}.

    \subsection{Ultrametric space case}
    Let $(S, d, \mu)$ be an ultrametric and $s$-Ahlfors regular space, that is the ultratriangle inequality $d(x,z) \leq \max\{d(x,y), d(y,z)\}$ holds.   Combination of Theorem \ref{thm:Subcritical} and Proposition \ref{pro:WholeCover} shows that the Poisson Boolean percolation on $S$ is trivial.
    \begin{prop}
       Let $s>0$. Let $(S, d)$ be an ultrametric space. Let $\mu$ be an $s$-Ahlfors regular measure on $S$ and $\rho$ be a probability measure on $(0, \infty)$. If $\int_0^\infty R^s \rho(dR) < \infty$, then $\sup_{x\in S}\P_\lambda(M(x) = \infty) = 0$ for all $\lambda \in (0, \infty)$. If $\int_0^\infty R^s \rho(dR) = \infty$, then $\P_\lambda(\mathscr{W} = S) = 1$ for $\lambda \in (0,\infty)$.
    \end{prop}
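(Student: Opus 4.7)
The statement is a dichotomy indexed by the $s$-th moment of $\rho$, and the two halves call for different arguments. The divergent case $\int_0^\infty R^s \rho(dR) = \infty$ is simply Proposition \ref{pro:WholeCover} applied verbatim, which already yields $\P_\lambda(\BP = S) = 1$ for every $\lambda > 0$ using only $s$-Ahlfors regularity; the ultrametric structure plays no role here.

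For the convergent case, the plan is to exploit the rigidity of ultrametric geometry. The crucial fact is that in an ultrametric space every open ball $B(y, R)$ satisfies $B(y, R) = B(x, R)$ for every $x \in B(y, R)$, and consequently any two intersecting open balls are nested by inclusion. I would fix $x \in S$ and analyze the Poisson points whose balls cover $x$. The expected number of such points equals $\lambda \int_0^\infty \mu(B(x, R)) \rho(dR) \leq \lambda \constV \int_0^\infty R^s \rho(dR) < \infty$, so almost surely only finitely many Poisson balls contain $x$. In particular, the quantity
\[
R^*(x) := \max\bigl\{R : (y, R) \in \support \xi,\ d(x, y) < R \bigr\}
\]
is almost surely finite and attained by some Poisson point, and the corresponding ball coincides with $B(x, R^*(x))$.

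Next I would show $\cluster(x) \subseteq B(x, R^*(x))$ almost surely. By maximality, no Poisson point $(z, R')$ with $R' > R^*(x)$ can cover $x$. If any other Poisson ball $B(z, R')$ meets $B(x, R^*(x))$, the ultrametric nesting dichotomy forces the two to be comparable; the scenario in which $B(z, R')$ strictly contains $B(x, R^*(x))$ would put $x$ into $B(z, R')$ and violate maximality, so in fact $B(z, R') \subseteq B(x, R^*(x))$. Iterating this along any percolative chain starting inside $B(x, R^*(x))$ confines every ball of the chain inside $B(x, R^*(x))$; hence $M(x) \leq R^*(x) < \infty$ almost surely, giving $\P_\lambda(M(x) = \infty) = 0$ pointwise, and the supremum over $x \in S$ of this identically zero quantity is zero.

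The only real subtlety is verifying that $R^*(x)$ is attained rather than merely a supremum, which is exactly the role of the finite moment assumption through the first moment calculation above. Once that is in place, the whole argument is a single use of the ultrametric nesting property, and it is strikingly robust: it holds for every $\lambda > 0$ with no smallness assumption, reflecting the complete collapse of the phase transition in the ultrametric setting.
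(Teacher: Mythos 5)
Your proposal is correct, and it rests on exactly the same structural fact as the paper's proof: in an ultrametric space intersecting balls are nested, so the cluster of $x$ is swallowed by a single Poisson ball covering $x$, and the finite $s$-moment makes $\lambda\int_0^\infty \mu(B(x,R))\,\rho(dR)$ finite. The divergent case is, as you say, just Proposition \ref{pro:WholeCover}. The difference is in execution rather than in the idea. The paper observes the event identity $\{M(x)>r\}=\{\xi(B(x,r)\times(r,\infty))\geq 1\}$ and computes the void probability directly, which yields the quantitative, uniform-in-$x$ tail bound
\begin{equation*}
\sup_{x\in S}\P_\lambda(M(x)>r)\leq 1-\exp\Bigl(-\lambda\constV\int_r^\infty R^s\,\rho(dR)\Bigr)\xrightarrow[r\to\infty]{}0,
\end{equation*}
i.e.\ the same form of subcriticality as in (1-a) of Theorem \ref{thm:Subcritical}. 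Your pathwise argument via the maximal covering radius $R^*(x)$ instead gives $\P_\lambda(M(x)=\infty)=0$ for each fixed $x$ with no rate; since a supremum of zeros is zero, this suffices for the proposition as literally stated, but it is strictly weaker information than the paper extracts. Your containment argument is sound (any Poisson ball meeting $B(x,R^*(x))$ is either contained in it or contains $x$ and so has radius at most $R^*(x)$ by maximality, and this propagates along a \pchain by induction), and you correctly identify that the finite first moment is what makes $R^*(x)$ an attained maximum rather than a supremum; you should just note explicitly the trivial case where no Poisson ball covers $x$, in which $M(x)=0$.
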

    \begin{proof}
        We first note that if $d$ is ultrametric, then $y \in B(x,r)$ implies $B(y,r) = B(x,r)$. Moreover, if two balls have nonempty intersection, then one ball is contained in the other one. Therefore, for all $x \in S$ and $r > 0$, we have $\{M(x) > r\} = \{\xi(B(x,r)\times (r, \infty) ) \geq 1\}$.  
        Let $\lambda \in (0,\infty)$. We estimate 
        \begin{align*}
            \sup_{x \in S}\P_\lambda(M(x) > r) 
            &= \sup_{x \in S}\P_\lambda(\xi(B(x,r) \times (r,\infty) \geq 1 ))\\
            &= \sup_{x\in S} \Biggl(1 - \exp\biggl( - \lambda \int_r^\infty \mu(B(x,R))\rho(dR) \biggr)\Biggr)\\
            &\leq 1 - \exp\biggl( - \lambda \constV \int_r^\infty R^s\rho(dR)\biggr).
        \end{align*}
        If $\int_0^\infty R^s \rho(dR) < \infty$, the  right hand side of the above inequality tends to $0$ as $r \to \infty$. Therefore, we have $\lim_{r \to \infty}\sup_{x \in S}\P_\lambda(M(x) > r) = 0$.
        If $\int_0^\infty R^s \rho(dR) = \infty$, by Proposition \ref{pro:WholeCover}, we have $\P_\lambda(M(x) = \infty) = 1$ for all $\lambda \in (0,\infty)$. 
    \end{proof}

    \subsection{Unbounded Sierpinski gasket}
    Set $a_1 = (0,0), a_2 = (1,0)$, and $a_3 = (1/2, \sqrt{3}/2)$. For $i = 1,2,3$, define a map $F_i\colon \R^2 \to \R^2$  as $F_i(x) = (x-a_i)/2 + a_i$, $x \in \R^2$.
    Let $\tilde{S} \subset \R^2$ be the Sierpinski gasket, That is, the unique compact set satisfying $\tilde{S} = \bigcup_{i=1}^3 F_i(\tilde{S})$.
    Then the unbounded Sierpinski gasket $S$ is defined as $S = \bigcup_{n\geq 1}2^n \tilde{S}$.  Let $d(x,y)$ be the infimum of the length of the continuum paths in $S$ connecting $x$ and $y$. Let $\mu$ be the $(\log 3/\log 2)$-dimensional Hausdorff measure. Then $(S, d, \mu)$ is $(\log3 /\log2)$-Ahlfors regular. (See \cite[Remark 3.3]{Bar1998B}.) Hence we can apply Theorem \ref{thm:Subcritical}.

    \subsection{Inhomogeneous continuum percolation on the Euclidean space}
    Consider the $n$-dimensional Euclidean space $\R^n$ and let $d$ be the Euclidean distance. Let $g:\R^n \to [0,\infty)$ be a locally integrable function satisfying $\constV^{-1}r^n \leq  \int_{B(x,r)} g(x)dx \leq \constV r^n$. Set $d\mu = gdx$. Then the Poisson Boolean percolation $\BP_\lambda(\mu, \rho)$ has a subcritical regime. 
    Since $(\R^n, d, \mu)$ is mm-quasi isometric to $(\R^n ,d, dx)$, Theorem 1 of Coletti-Miranda-Mussini \cite{ColMirMus2016} implies that the Poisson Boolean percolation $\BP_\lambda(\mu,\rho)$ has subcritical regime if $\rho$ is the delta measure massed at some fixed radius. Since $(\R^n,d,\mu)$ is $n$-Ahlfors regular, we know that we can improve the above statement: for a radius distribution $\rho$, the Poisson Boolean percolation $\BP_\lambda(\mu, \rho)$ has the subcritical regime if and only if the radius distribution $\rho$ has finite $n$-th moment.

    \subsection{Snowflake of an Ahlfors regular space}
    Let $\alpha \in (0,1)$. For a metric $d$ on $S$, we can define a new metric $d^\alpha$ by $d^\alpha(x,y) = d(x,y)^\alpha$. The metric space $(S, d^\alpha)$ is called the snowflake of the metric space $(S, d)$. (See \cite[Definition 1.2.8.]{MacTys2010})  If $(S, d, \mu)$ is $s$-Ahlfors regular, then $(S, d^\alpha, \mu)$ is $s/\alpha$-Ahlfors regular. Therefore, by Theorem \ref{thm:Subcritical}, if $\rho$ is a probability measure on $(0, \infty)$ which has the finite $s/\alpha$-moment, the Poisson Boolean percolation has a subcritical regime. Note that because $(S, d)$ and $(S, d^\alpha)$ are not quasi-isometric, this example is beyond the reach of Coletti, Miranda, and Mussini's work \cite{ColMirMus2016}. Therefore, this is a new example that emerged from this study.

    $ $

    \textbf{Acknowledgements}
    The author thank Professor Hideki Tanemura for reading an earlier version of the manuscript and for useful comments.

   \bibliographystyle{hplain}
   \bibliography{PBMreference}
\end{document}